\newcommand{\argmin}{\operatornamewithlimits{argmin}}
\algnewcommand{\Inputs}[1]{%
  \State \textbf{Inputs:}
  \Statex \hspace*{\algorithmicindent}\parbox[t]{.8\linewidth}{\raggedright #1}
}
\algnewcommand{\Initialize}[1]{%
  \State \textbf{Initialization:}
  \Statex \hspace*{\algorithmicindent}\parbox[t]{.8\linewidth}{\raggedright #1}
}
\begin{document}

\title{A new dual for quadratic programming and its applications}
\subtitle{}

%\titlerunning{Short form of title}        % if too long for running head

\author{Moslem Zamani       %etc.
}

%\authorrunning{Short form of author list} % if too long for running head
\institute{M. Zamani \at
              Parametric MultiObjective Optimization Research Group, Ton Duc Thang University, Ho Chi Minh City, Vietnam\\
              Faculty of Mathematics and Statistics, Ton Duc Thang University, Ho Chi Minh City, Viet-nam\\
              School of Mathematics, Statistics and Computer Science, College of Science, University of Tehran, Enghelab Avenue, Tehran, Iran\\
                \email{zamani.moslem@tdt.edu.vn}           %  \\
}

\date{Received: date / Accepted: date}
% The correct dates will be entered by the editor

\maketitle
\begin{abstract}
The main outcomes of the paper are divided into two parts. First, we present a new dual for quadratic programs,  in which, the dual variables are affine functions, and  we prove strong duality. Since the new dual is intractable, we consider a modified version by restricting the feasible set. This leads to a new bound for quadratic programs.  We demonstrate that the dual of the bound is a semi-definite relaxation of quadratic programs. In addition,  we probe the relationship between this bound and the well-known bounds. In the second part, thanks to the new bound, we propose a branch and cut algorithm for concave quadratic programs. We establish that the algorithm enjoys global convergence. The effectiveness of the method is illustrated for numerical problem instances.
\keywords{Non-convex quadratic programming \and Duality \and  Semi-definite relaxation \and Bound \and Branch and cut method \and Concave quadratic programming}
% \PACS{PACS code1 \and PACS code2 \and more}
% \subclass{MSC code1 \and MSC code2 \and more}
\end{abstract}

\section{Introduction}
\label{intro}

 We consider the following  quadratic program (QP):
\begin{equation}\tag{QP}\label{P}
\begin{array}{ll}
 \ & \min \ x^TQx+2c^Tx
\\ &  s.t. \ Ax\leq b,
\end{array}
\end{equation}
where $Q$ is a real symmetric $n\times n$ matrix, $A$ is a real $m\times n$ matrix, $c\in\mathbf{R}^n$ and $b\in\mathbf{R}^m$. Moreover, throughout the paper, it is assumed that  the feasible set, $X=\{x\in \mathbf{R}^n: Ax\leq b\}$, is nonempty and bounded. It is well-known that (QP) is solvable in polynomial time when $Q$ is positive semi-definite. Nevertheless,  indefinite QPs,  in which $Q$ is an indefinite matrix, are NP-hard even for rank-1 cases \cite{Par, Sahni}. In the paper, our focus is on non-convex QPs.\\
\indent
 Duality plays a fundamental role in optimization, from both theoretical and numerical points of view \cite{Luen}. It serves as a strong tool in stability and sensitivity analysis. For convex problems, duality is employed in some numerical methods for obtaining or verifying an optimal solution \cite{Boyd, Nem}.  \\
 \indent
It is well-known that the (Lagrangian) dual of a convex QP is also a convex QP, and satisfies strong duality. However, for non-convex case the dual of QPs might be meaningless because  the objective function of the dual problem might be  $-\infty$ while the primal has a finite optimal value.  \\
\indent
 The strong duality holds for convex QPs, though this property is still valid for some non-convex cases.  Optimizing a quadratic function on the sublevel set of a quadratic function is an archetype. S-lemma guarantees strong duality under Slater condition \cite{Ter}. \\ 
\indent
Global optimization methods for QPs are typically based on the convex relaxations and bounds. The most effective relaxations  for QPs (with quadratic constraints) rest upon the semidefinite programming and the reformulation-linearization technique (RLT) \cite{ Bao, Sherali2}. The semi-definite relaxations were first applied to some combinatorial problems \cite{Lov}. Due to their efficiency, these methods  have been extended to QPs with quadratic constraints \cite{Nest}. For more discussion on semidefinite relaxations and their comparisons, we refer the reader to the recent survey \cite{Bao}. Moreover, recently it has been shown that the combination of the semidefinite relaxations and RLT  leads to  stronger relaxations  \cite{Ans, Bao}.\\
\indent
 In addition to the relaxation methods, scholars have proposed some bounds for  classes of QPs \cite{Bomze1}. Similar to the  relaxation methods bounds give a lower bound. The most effective bounds for QPs are based on semidefinite programming  \cite{Bomze1}.\\
\indent
Another method which is able to give a bound for QPs is the so-called Lasserre hierarchy \cite{Las}. In fact, this method provides optimal value.  Lasserre hierarchy is able to tackle polynomial optimization problems (optimizing a polynomial function on a given  semi-algebraic set). It is well-known that  polytopes are Archimedean. Hence, due to the Putinar's Positivstellensatz theorem, optimal value of \eqref{P} is obtained by solving the following convex optimization problem:
\begin{equation}
\begin{array}{lll}
 \ & \max \ell
\\ &  s.t.  \  x^TQx+2c^Tx-\ell=\sigma_0(x)-\sum_{i=1}^{m} \sigma_i(x)(A_ix-b_i),\\
& \ \  \ \  \ \sigma_i\in \Sigma[x], \ i=0, 1, ..., m,
\end{array}
\end{equation}
 where $\Sigma[x]$ denotes the cone of polynomials which are sums of squares (SOS) \cite{Las}. By virtue of Lasserre hierarchy, the optimal value of \eqref{P} can be obtained by solving the finite number of semi-definite programs. However, the dimension of  semi-definite programs may increase dramatically \cite{Las}.\\
\indent
 Note that the Handelman's approximation hierarchy can be also used to produce a bound for QPs. Indeed, this method  provides optimal value under some mild conditions \cite{Lau}. In this approach, each subproblem is a linear program,  though similar to the Lasserre hierarchy  the dimension of  linear programs may increase exponentially. For more details on the method, we refer the interested reader to \cite{Lau, Las}.\\
 \indent
Concave QPs are important both theoretically and practically. Concave QPs appear in many applications including fixed charge and risk management problems and quadratic assignment problems \cite{Floudas, Buc, Klo}. In addition, it has been shown that some class of QPs can be reformulated as  concave QPs \cite{Floudas, Klo}.
It is well-known that a concave QP realizes its minimum at some vertices \cite{Floudas}. So, the problem is equivalent to the combinatorial problem of  optimizing a quadratic function on the vertices of a given polytope. This problem, as mentioned earlier, is NP-hard. Many avenues for tackling concave QPs have been pursued. Typical approaches are cutting plane methods, successive approximation methods and branch and bound approaches \cite{Horst}. For more methods and details, see also \cite{Floudas, Horst, Tuy}.  \\
\indent
One of the effective approach  for solving QPs is mixed integer programming reformulation \cite{Xiam}. As there exist  current state-of-the-art  mixed integer programming  solvers including GUROBI and CPLEX, this method may be very efficient.  Note that some solvers including CPLEX takes advantage of this idea to handle QPs \cite{Cplex}. Semidefinite relaxations have been also employed in branch and bound method for solving QPs \cite{Bur2, Chen}.  \\
\indent
Another approach which has deserve to be mentioned here is copositive programming method.  It is known that  a QP with quadratic constraints can be formulated as a linear program over the cone of completely positive matrices; See \cite{Bur} and references therein. Although the latter is convex, the cone of completely positive matrices is intractable. In fact, it is also an NP-hard problem. \\
 \indent
 The paper is organized as follows. After reviewing terminologies and notations, the new dual for QPs is introduced in Section 2. Dual variables are affine functions and strong duality is proved. As the dual problem is intractable, we take into account a subset of the feasible set, leading to a new bound for QPs. We show that the bound is well-defined for QPs with bounded feasible set. Moreover, we establish that the bound is invariant under affine transformation, and is independent of the algebraic representation of $X$. \\
 \indent 
  In Section 3, we investigate the relationship between the new bound and the conventional bounds. We prove that the new bound is equivalent to the best proposed bound  for standard QPs. Moreover, we show that for box constrained QPs the dual of the new bound is Shor relaxation with partial first-level RLT. \\
  \indent
  Section 4 is devoted to concave QPs. Thanks to the new bound, we introduce a new branch and cut method. In Section 5, we illustrate the effectiveness of the proposed method by presenting its numerical performance on some concave QPs.
\subsection{Notation}
 The $n$-dimensional Euclidean
space is denoted by $\mathbf{R}^n$.  We denote the  $i^{th}$ row of a given matrix $A$ by $A_i$. Vectors are considered to be column vectors and the superscript $T$ denotes the transpose operation. W use $e$ and and $e_i$ to denote vector of ones and $i^{th}$ unit vector, respectively. 
The nonnegative orthant is denoted by $\mathbf{R}_{+}^n$. Notation $A \succeq B$ means  matrix $A-B$ is positive semidefinite. Furthermore, $ A \bullet B$ denotes the inner product of A and B,  i.e., $A \bullet B=trace(AB^T)$. \\
\indent
For a set $X\subseteq\mathbf{R}^n$, we use the notations $int(X)$ and
$cone(X)$ for the interior and the convex conic hull of  $X$, respectively. For a convex cone $K$, its dual cone is defined and denoted by $K^*=\{y: y^Tx\geq 0, \ \forall x\in K\}$.
 Two notations $\nabla f(\bar x)$ and $\nabla^2 f(\bar x)$ stand for the gradient and Hessian of smooth function $f$ at $\bar x$. For the affine function 
 $\alpha: \mathbf{R}^n \to  \mathbf{R}$ given by 
 $\alpha(x)=a^Tx+a_0$, its norm is defined and denoted by $\|\alpha\|=\max_{i=0,1, ..., n} |a_i|$.
\section{A new dual for quadratic programs}
\label{sec:1}
 In this section, we present a new dual for QPs. Throughout the section, it is assumed that $X$ is a bounded polyhedral.
We propose the following convex optimization problem as a dual of  \eqref{P},
\begin{equation}\label{Dn}
\begin{array}{ll}
 \ & \max \ \ell \\
 \  &  s.t.\   x^TQx+2c^Tx-\ell+\sum_{i=1}^{m} \alpha_i(x)(A_ix-b_i)\in P[x],\\
\  & \ \ \ \  \  \sum_{i=1}^{m} \alpha_i(x)(A_ix-b_i)\leq 0, \  \  x\in X,
\end{array}
\end{equation}
where $\alpha_i$, $i=1, ..., m$, are affine functions and $P[x]$ denotes the set of nonnegative polynomials on $\mathbf{R}^n$. It is readily seen that the above problem is a convex problem with  infinite constraints. Note that a quadratic function $q(x)=x^TQx+2c^Tx+c_0$ is nonnegative on $\mathbf{R}^n$ if and only if 
$
 \begin{pmatrix}
Q & c \\ c & c_0
\end{pmatrix}\succeq 0
$.
We prove that problem \eqref{Dn} is feasible and fulfills strong duality.  Before we get to the proof, we need to present a lemma.

\begin{lemma}\label{L1n}
Let $X=\{x\in\mathbf{R}^n: Ax\leq b\}$ be a full-dimensional polytope and let $q(x)=x^TQx+2c^Tx+c_0$ be a quadratic function. Then there exist affine functions $\alpha_i$ for $i=1, ..., m$ such that
$$
x^TQx+2c^Tx+c_0=\sum_{i=1}^{m} \alpha_i(x)(A_ix-b_i).
$$
\begin{proof}
The existence of affine functions $\alpha_i$, $i=1, ..., m$, satisfying the desired equality is equivalent to the consistency of the linear system
 $$
\frac{1}{2} \sum_{i=1}^{m}
 \begin{pmatrix}
d_i \\ f_i
\end{pmatrix}
 \begin{pmatrix}
A_i & -b_i
\end{pmatrix}
+
\frac{1}{2} \sum_{i=1}^{m}
 \begin{pmatrix}
A_i^T \\ -b_i
\end{pmatrix}
 \begin{pmatrix}
d_i^T & f_i
\end{pmatrix}=
 \begin{pmatrix}
Q & c \\ c & c_0
\end{pmatrix},
$$
in which $ \begin{pmatrix}
d_i \\ f_i
\end{pmatrix}$
, $i=1, ..., m$, are variables. Indeed,  $ \begin{pmatrix}
d_i \\ f_i
\end{pmatrix}$
is the representative  of $\alpha_i$. To prove the consistency, it is sufficient to show that the above system has full rank. On the contrary, suppose that the above linear system does not have full rank. So, there exists a non-zero symmetric matrix $D$ such that
 $$
\frac{1}{2} \sum_{i=1}^{m}
D\bullet \begin{pmatrix}
d_i \\ f_i
\end{pmatrix}
 \begin{pmatrix}
A_i & -b_i
\end{pmatrix}
+
\frac{1}{2} \sum_{i=1}^{m}
 D\bullet \begin{pmatrix}
A_i^T \\ -b_i
\end{pmatrix}
 \begin{pmatrix}
d_i^T & f_i
\end{pmatrix}=
0
$$
Since $A\bullet x^Ty=x^TAy$, we have
 $$
 \sum_{i=1}^{m}
 \begin{pmatrix}
A_i & -b_i
\end{pmatrix}
D \begin{pmatrix}
d_i \\ f_i
\end{pmatrix}
=
0.
$$
As  $\Big\{\begin{pmatrix}
A_1^T \\ -b_1
\end{pmatrix}, ..., \begin{pmatrix}
A_m^T \\ -b_m
\end{pmatrix}\Big\}$
generates $\mathbf{R}^{n+1}$, $D$ must be zero. This contradicts our assumption $D\neq 0$ and implies the consistency of the linear system.
\end{proof}
\end{lemma}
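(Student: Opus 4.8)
The plan is to recast the claimed identity as the solvability of a linear system and then to establish that the corresponding linear map is onto, which—by a transpose/rank argument—reduces to a spanning property of the homogenized constraint data. Writing each unknown affine function as $\alpha_i(x)=d_i^Tx+f_i$ and introducing the homogenized variable $\tilde{x}=\begin{pmatrix} x \\ 1\end{pmatrix}$, I would first observe that both sides are quadratic forms in $\tilde{x}$: the left side equals $\tilde{x}^T\begin{pmatrix} Q & c \\ c^T & c_0\end{pmatrix}\tilde{x}$, while a single product $\alpha_i(x)(A_ix-b_i)$ equals $\tilde{x}^T v_i w_i^T \tilde{x}$ with $v_i=\begin{pmatrix} d_i \\ f_i\end{pmatrix}$ and $w_i=\begin{pmatrix} A_i^T \\ -b_i\end{pmatrix}$. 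Symmetrizing each rank-one block, the desired identity becomes exactly the matrix equation displayed in the statement, a linear system whose unknowns are the vectors $v_1,\ldots,v_m\in\mathbf{R}^{n+1}$ and whose right-hand side is the arbitrary symmetric matrix $\begin{pmatrix} Q & c \\ c^T & c_0\end{pmatrix}$.

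Because the right-hand side is an arbitrary symmetric $(n+1)\times(n+1)$ matrix, solvability for every $q$ is precisely the surjectivity of the linear map $L$ sending $(v_i)_{i=1}^m$ to $\sum_i \frac{1}{2}(v_iw_i^T+w_iv_i^T)$ onto the space of symmetric matrices. I would verify surjectivity in its standard dual form: $L$ is onto iff the only symmetric matrix $D$ orthogonal (in the trace inner product) to the whole range is $D=0$. Using $D\bullet(uv^T)=u^TDv$ together with the symmetry of $D$, the orthogonality condition $D\bullet L((v_i))=0$ for all inputs collapses to $\sum_i v_i^T(Dw_i)=0$ for all choices of $v_i$, i.e. to $Dw_i=0$ for every $i=1,\ldots,m$. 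Thus the whole lemma reduces to showing that the hypotheses force $D=0$ whenever $D$ annihilates all the $w_i$.

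The crux, and the main obstacle, is the geometric fact that the homogenized rows $\{w_1,\ldots,w_m\}$ span $\mathbf{R}^{n+1}$; granting this, $Dw_i=0$ for all $i$ immediately yields $D=0$ and the proof is complete. To prove spanning I would argue by contradiction: if the $w_i$ fail to span, there is a nonzero $\begin{pmatrix} y \\ t\end{pmatrix}$ with $A_iy=tb_i$ for all $i$, i.e. $Ay=tb$. Here the two hypotheses enter. If $t=0$ then $Ay=0$ with $y\neq 0$, so for any $x_0\in X$ the entire line $x_0+sy$ lies in $X$, contradicting boundedness. If $t\neq 0$, rescale so that $A(y/t)=b$; choosing an interior point $x_0$ with $Ax_0<b$ (which exists because $X$ is full-dimensional) and setting $d=y/t-x_0$, one gets $Ad=b-Ax_0>0$ componentwise, so the ray $\{x_0-sd:\ s\geq 0\}$ stays in $X$ and is unbounded, again contradicting boundedness. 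Hence no such $\begin{pmatrix} y \\ t\end{pmatrix}$ exists, the $w_i$ span $\mathbf{R}^{n+1}$, and the required consistency follows. I expect the linear-algebra reformulation to be routine; the only step demanding genuine care is this last geometric argument, where both full-dimensionality (to produce a strict interior point) and boundedness (to derive the contradiction) are essential.
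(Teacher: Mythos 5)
Your proof is correct and follows essentially the same route as the paper: recast the identity as a linear system in the vectors $\begin{pmatrix} d_i \\ f_i \end{pmatrix}$, test surjectivity against a symmetric matrix $D$ orthogonal to the range, and reduce everything to the fact that the homogenized rows $\begin{pmatrix} A_i^T \\ -b_i \end{pmatrix}$ span $\mathbf{R}^{n+1}$. The only difference is that you actually derive this spanning property from boundedness and full-dimensionality (your two-case argument is sound), whereas the paper simply asserts it without proof.
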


\indent
The following theorem shows that the proposed dual satisfies strong duality.
\begin{theorem} \label{SD}
Let $X$ be a polytope. The optimal values of problems \eqref{P} and \eqref{Dn} are equal.
\begin{proof}
Without loss of generality, we may assume that $X$ is full-dimensional. Otherwise it is enough to consider \eqref{P} on the affine space  generated by $X$. Suppose that $q^{\star}$ is the optimal value of \eqref{P}. By Lemma \ref{L1n}, there exists $\bar \alpha_i$, $i=1, ..., m$, such that 
$$
x^TQx+2b^Tx-q^{\star}=-\sum_{i=1}^{m} \bar\alpha_i(x)(A_ix-b_i).
$$
So,  $\bar \alpha_i$, $i=1, ..., m$, and $\bar\ell=q^{\star}$ are feasible for \eqref{Dn} and the optimal value of problem \eqref{Dn} is greater than or equal to $q^{\star}$. The  constraints of problem \eqref{Dn} imply that the optimal value of the dual problem is not greater than $q^{\star}$. So, the aforementioned feasible point is optimal solution for  the dual problem and the proof is complete.
\end{proof}
\end{theorem}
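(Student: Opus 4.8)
The plan is to prove the two inequalities separately: weak duality, that every feasible value of \eqref{Dn} is at most the optimal value $q^{\star}$ of \eqref{P}, and then exhibit an explicit feasible point of \eqref{Dn} attaining $q^{\star}$, which together give strong duality.

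For weak duality I would take an arbitrary feasible triple $(\ell,\alpha_1,\dots,\alpha_m)$ of \eqref{Dn} and evaluate the first constraint at any point $x\in X$. Since $x^TQx+2c^Tx-\ell+\sum_{i=1}^m \alpha_i(x)(A_ix-b_i)$ is a nonnegative polynomial on all of $\mathbf{R}^n$, it is in particular nonnegative at $x$, giving $x^TQx+2c^Tx\geq \ell-\sum_{i=1}^m \alpha_i(x)(A_ix-b_i)$. The second constraint guarantees $\sum_{i=1}^m \alpha_i(x)(A_ix-b_i)\leq 0$ on $X$, so the right-hand side is at least $\ell$. Minimizing the left-hand side over $x\in X$ yields $q^{\star}\geq \ell$; since $\ell$ ranges over all feasible values, the optimal value of \eqref{Dn} does not exceed $q^{\star}$.

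For the reverse inequality I would construct a feasible point of objective value $q^{\star}$. First I reduce to the full-dimensional case: if $X$ lies in a proper affine subspace, I restrict \eqref{P} to the affine hull of $X$ and argue there, so that Lemma \ref{L1n} is applicable. Applying Lemma \ref{L1n} to the quadratic $q(x)=x^TQx+2c^Tx-q^{\star}$ produces affine functions $\bar\alpha_i$ with $x^TQx+2c^Tx-q^{\star}=-\sum_{i=1}^m \bar\alpha_i(x)(A_ix-b_i)$, the sign being absorbed into $\bar\alpha_i$. Setting $\ell=q^{\star}$ and $\alpha_i=\bar\alpha_i$, the expression in the first constraint collapses identically to the zero polynomial, which is trivially nonnegative; and the second constraint reads $\sum_{i=1}^m \bar\alpha_i(x)(A_ix-b_i)=q^{\star}-(x^TQx+2c^Tx)\leq 0$ for $x\in X$, which holds precisely because $q^{\star}$ is the minimum of the objective over $X$. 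This point is therefore feasible with value $q^{\star}$, and combined with weak duality the two optimal values coincide.

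The main obstacle I anticipate is the full-dimensionality reduction together with the correct invocation of Lemma \ref{L1n}: the lemma is stated only for full-dimensional polytopes, so one must verify that passing to the affine hull of $X$ leaves the optimal value of \eqref{P} unchanged and still delivers affine multipliers that are valid for the constraints in the original coordinates. Once the lemma is secured, the verification of both constraints is immediate, since the chosen multipliers force the first constraint to vanish exactly rather than merely being nonnegative, which is what pins the dual objective to $q^{\star}$.
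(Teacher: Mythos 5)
Your proposal is correct and follows essentially the same route as the paper's proof: reduce to the full-dimensional case, apply Lemma \ref{L1n} to $x^TQx+2c^Tx-q^{\star}$ to build a feasible point of \eqref{Dn} with value $q^{\star}$, and combine with weak duality read off from the two constraints. You merely spell out the weak-duality step and the verification of the second constraint more explicitly than the paper does, which is a harmless (indeed welcome) elaboration rather than a different argument.
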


\indent
Although problem \eqref{L1n} is convex, it is not tractable. This difficulty is caused by the number of constraints. So, to take advantages of this formulation, we need to adopt a procedure which one is able to handle  the dual problem. A natural approach is the maximization of the objective function on a subset of the feasible set, i.e. restricting the feasible set of the dual problem. In the rest of the paper, we restrict the variables ,$\alpha_i$, to nonnegative affine functions on $X$.  \\
\indent 
If the vertices of $X$ are also available one can consider the following set which includes the aforementioned set. Let $v_1, ..., v_k$ denote the vertices of $X$. It is easily seen that the affine functions $\alpha_i$, $i=1, ..., m$, which fulfill the following inequalities are feasible for problem \eqref{Dn},
\begin{align}\label{gfgh}
\sum_{i=1}^m(A_iv_j-b_i)\alpha_i(x)\leq 0, \  x\in X, j=1, ..., k.
\end{align}
Non-homogenous  Farkas' Lemma provides an explicit form of the affine functions which satisfy \eqref{gfgh}. So, it can be formulated by a finite number of linear inequalities. If $cone(\{Ax-b: x\in X\})=-\mathbf{R}^m_+$, then  $\alpha_i$, $i=1, ..., m$, that fulfill \eqref{gfgh} are nonnegative affine functions on $X$.\\
\indent
 Let $\mathcal{A}_+(X)$ denote the set of nonnegative affine functions on non-empty polytope $X$. By non-homogenous  Farkas' Lemma, 
 $\alpha(x)=d^Tx+f$ is nonnegative on nonempty polyhedron $X$ if and only if there exist nonnegative scalars $\lambda_i$, $i=0, ..., m$, with
 $\alpha(x)=\lambda_0+\sum_{i=1}^m \lambda_i(b_i-A_ix)$; See Theorem 8.4.2 in \cite{Mang}. It is easily seen that  $\mathcal{A}_+(X)$ is a polyhedral  cone with nonempty interior  \cite{Mang}. To tackle problem \eqref{Dn}, we consider the following restricted problem,
 \begin{equation}\label{L}
\begin{array}{ll}
 \ & \max \ \ell \\
 \  &  s.t.\   x^TQx+2c^Tx-\ell+\sum_{i=1}^{m} \alpha_i(x)(A_ix-b_i)\in P[x],\\
\  & \ \ \ \ \ \  \alpha_i\in \mathcal{A}_+(X), \ i=1, ..., m.
\end{array}
\end{equation}
\indent
 Considering affine functions instead of scalars for Lagrange multipliers can be found in the literature. To extend  S-lemma, Sturm et al. applied an affine function as a Lagrange multiplier \cite{Sturm}. To make the point clear, consider the optimization problem
  \begin{equation}\label{St}
\begin{array}{ll}
 \ & \min \ x^TQ_1x+2c_1^Tx \\
 \  &  s.t.\   x^TQ_2x+2c_2^Tx+b_2\leq 0,\\
\  & \ \  \ \ \ \ c_3^Tx+b_3\leq 0,
\end{array}
\end{equation}
 where $Q_2 \succeq 0$ and $\mathcal{X}=\{x: x^TQ_2x+2c_2^T x+b_2\leq 0, c_3^T\bar x+b_3\leq0\}$ has nonempty interior.  They show that the optimal values of problem \eqref{St} and the following problem are the same,
  \begin{equation*}
\begin{array}{ll}
 \ & \max \ \ell \\
 \  &  s.t.\   x^TQ_1x+2c_1^Tx-\ell+t(x^TQ_2x+2c_2^Tx+b_2)+(e^Tx+f)(c_3^Tx+b_3)\in P[x],\\
\  & \ \ \ \ \ \  t\geq 0, \ \begin{pmatrix} d\\ f \end{pmatrix}\in \mathcal{H}^*,
\end{array}
\end{equation*}
 where the convex cone $\mathcal{H}=\Big\{\begin{pmatrix} x\\ x_0 \end{pmatrix}: x^TQ_2x+2x_0c_2^Tx+b_2x_0^2\leq 0, 2c_2^Tx+b_2x_0\leq 0, x_0\geq 0\Big\}$. As seen, the Lagrange multiplier of the linear constraint is  an affine function. Unlike problem \eqref{L}, which the feasible affine functions are characterized by the feasible set,  only the quadratic constraint determines a feasible affine function. It may be of interest to verify the validity of Sturm et al.'s result when the constraint 
 $\begin{pmatrix} d\\ f \end{pmatrix}\in \mathcal{H}^*$
 is replaced with $d^Tx+f \in \mathcal{A}_+(\mathcal{X})$. The following proposition says that the result also holds under these conditions.
 
 \begin{proposition}\label{PSt}
If $Q_2 \succeq 0$ and $int(\mathcal{X}) \neq \emptyset$, then the optimal value of the following problem is equal to that of problem \eqref{St},
  \begin{align}\label{Stz}
\nonumber \ & \max \ \ell \\
\nonumber  \  &  s.t.\   x^TQ_1x+2c_1^Tx-\ell+t(x^TQ_2x+2c_2^Tx+b_2)+(d^Tx+f)(c_3^Tx+b_3)\in P[x],\\
\  & \ \ \ \ \ \  t\geq 0, \  d^Tx+f \in \mathcal{A}_+(\mathcal{X}).
\end{align}
\begin{proof}
It is seen that the optimal value of problem \eqref{Stz} is less than or equal to that of problem  \eqref{St}. By Sturm et al.'s result, to show equality, it suffices to prove the inclusion
 $\Big\{d^Tx+f: \begin{pmatrix}  d\\  f \end{pmatrix}\in \mathcal{H}^*\Big\}\subseteq\mathcal{A}_+(\mathcal{X})$. 
 Assume on the contrary, there exists
  $\begin{pmatrix} \bar d\\ \bar f \end{pmatrix}\in  \mathcal{H}^*$,
   but $\bar d^Tx+\bar f \notin \mathcal{A}_+(\mathcal{X})$. So, there exists $\bar x\in \mathcal{X}$ with $\bar d^T\bar x+\bar f <0$. The semi-positive definiteness of $Q_2$ and $\bar x\in \mathcal{X}$ imply that   
   $2c_2^T\bar x+b_2\leq -\bar x^TQ_2\bar x\leq 0$. Therefore, $\begin{pmatrix} \bar x\\ 1\end{pmatrix}\in  \mathcal{H}$. Since $\begin{pmatrix} \bar d\\ \bar f \end{pmatrix}\in  \mathcal{H}^*$, we must have $\bar d^T\bar x+\bar f \geq 0$, which contradicts the assumption that  $\bar d^T\bar x+\bar f < 0$ and completes the proof. 
\end{proof}
\end{proposition}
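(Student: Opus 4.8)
The plan is to recognize that problems \eqref{St} and \eqref{Stz} are connected through Sturm et al.'s reformulation, and that the only structural difference between \eqref{Stz} and Sturm's program is the feasible region of the affine multiplier attached to the linear constraint $c_3^Tx+b_3\leq 0$: in Sturm's version it is $\begin{pmatrix} d\\ f\end{pmatrix}\in\mathcal{H}^*$, whereas in \eqref{Stz} it is $d^Tx+f\in\mathcal{A}_+(\mathcal{X})$. The conditions $t\geq 0$ and $(\cdots)\in P[x]$ are identical, so the entire argument should collapse to a comparison of these two sets of affine functions. I would therefore prove the two inequalities between the optimal values separately: an easy ``weak duality'' inequality one way, and a set-inclusion argument, fed by Sturm et al.'s result (which also absorbs the hypothesis $int(\mathcal{X})\neq\emptyset$ as its regularity condition), the other way.

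First I would settle that the optimal value of \eqref{Stz} is at most that of \eqref{St}, which is routine. For any feasible $(\ell,t,d,f)$ of \eqref{Stz} and any $x\in\mathcal{X}$, the two defining inequalities of $\mathcal{X}$ give $x^TQ_2x+2c_2^Tx+b_2\leq 0$ and $c_3^Tx+b_3\leq 0$; combined with $t\geq 0$ and $d^Tx+f\geq 0$ (the latter since $d^Tx+f\in\mathcal{A}_+(\mathcal{X})$), both product terms in the $P[x]$ expression are nonpositive on $\mathcal{X}$. Evaluating the nonnegative polynomial at such $x$ then forces $x^TQ_1x+2c_1^Tx\geq\ell$, so $\ell$ is a lower bound for \eqref{St}.

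For the reverse inequality I would invoke Sturm et al.'s result, so that it suffices to prove the inclusion $\{d^Tx+f:\begin{pmatrix} d\\ f\end{pmatrix}\in\mathcal{H}^*\}\subseteq\mathcal{A}_+(\mathcal{X})$. Once this inclusion is established, every feasible point of Sturm's program is feasible for \eqref{Stz}, so \eqref{Stz} has value at least that of Sturm's program, which equals the optimal value of \eqref{St}; together with the first step this yields equality.

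The crux, and the one place where $Q_2\succeq 0$ is genuinely needed, is the inclusion itself, which I would prove by contradiction: assume $\begin{pmatrix}\bar d\\ \bar f\end{pmatrix}\in\mathcal{H}^*$ but $\bar d^Tx+\bar f\notin\mathcal{A}_+(\mathcal{X})$, producing $\bar x\in\mathcal{X}$ with $\bar d^T\bar x+\bar f<0$. The key move is to lift $\bar x$ to the point $\begin{pmatrix}\bar x\\ 1\end{pmatrix}$ and verify it lies in $\mathcal{H}$; the first and third defining inequalities of $\mathcal{H}$ follow immediately from $\bar x\in\mathcal{X}$, while the remaining inequality $2c_2^T\bar x+b_2\leq 0$ is exactly where semidefiniteness enters, since $\bar x^TQ_2\bar x\geq 0$ forces $2c_2^T\bar x+b_2\leq-\bar x^TQ_2\bar x\leq 0$. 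With $\begin{pmatrix}\bar x\\ 1\end{pmatrix}\in\mathcal{H}$ and $\begin{pmatrix}\bar d\\ \bar f\end{pmatrix}\in\mathcal{H}^*$, dual-cone membership gives $\bar d^T\bar x+\bar f\geq 0$, contradicting the choice of $\bar x$. I expect the main obstacle to be precisely this lifting step: seeing that $Q_2\succeq 0$ is what legitimizes passing from an arbitrary $\bar x\in\mathcal{X}$ to a bona fide element of $\mathcal{H}$, and confirming that the multiplier convention $\begin{pmatrix} d\\ f\end{pmatrix}$ in the definition of $\mathcal{H}$ matches the affine function $d^Tx+f$ multiplying $c_3^Tx+b_3$.
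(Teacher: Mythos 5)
Your proposal is correct and follows essentially the same route as the paper: the easy direction by weak duality, then reduction via Sturm et al.'s result to the inclusion $\{d^Tx+f:\begin{pmatrix} d\\ f\end{pmatrix}\in\mathcal{H}^*\}\subseteq\mathcal{A}_+(\mathcal{X})$, proved by contradiction by lifting $\bar x$ to $\begin{pmatrix}\bar x\\ 1\end{pmatrix}\in\mathcal{H}$ using $Q_2\succeq 0$. The only difference is that you spell out the weak-duality inequality, which the paper dismisses with ``it is seen that.''
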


 \indent
 Note that under the assumptions of Proposition \ref{PSt} affine function $\alpha$ is nonnegative on $\mathcal{X}$ if and only if there exist $\lambda_1\geq 0$ and 
 $\lambda_2\geq 0$ with
 $$
 \alpha(x)+\lambda_1(x^TQ_2x+2c_2^Tx+b_2)+\lambda_2(c_3^Tx+b_3)\in P[x];
 $$
 See \cite{Mang} for proof.  Thus, problem \eqref{Stz} can be formulated as a semi-definite program.\\
 \indent 
  Typically, SOS polynomials are used as Lagrange multipliers in polynomial optimization  \cite{Lass, Kim, Chu}. However, to the best knowledge of author, affine functions have not been applied in this manner as dual variables in finite optimization theory.  \\
\indent
Let us return back to problem \eqref{L}. When $X$ is nonempty, \eqref{L} can be formulated as the semi-definite program:
\begin{equation}\label{ggf}
\begin{array}{ll}
 \ & \max \ \ell \\
 \  &  s.t.\   \frac{1}{2}\sum_{i=1}^n   \begin{pmatrix}
  d_i \\
f_i
\end{pmatrix}
  \begin{pmatrix}
  A_i  & -b_i
\end{pmatrix}+
 \frac{1}{2}\sum_{i=1}^n   \begin{pmatrix}
  A_i ^T \\
-b_i
\end{pmatrix}
  \begin{pmatrix}
  d_i^T  & f_i
\end{pmatrix}+
\begin{pmatrix}
Q & c\\
c^T &  -\ell
\end{pmatrix}
\succeq 0,
\\
\  & \ \  \  \  \begin{pmatrix}
d_i \\
f_i
\end{pmatrix}
\in cone
\Big\{
 \begin{pmatrix}
  -A_j^T\\
b_j
\end{pmatrix}, 1\leq j \leq m,
  \begin{pmatrix}
  0 \\
1
\end{pmatrix}
\Big \}, \ i=1, ..., m.
\end{array}
\end{equation}
Note that if $X$ is empty, the second constraint of \eqref{ggf} is not necessarily equivalent to the constraint $\alpha_i\in \mathcal{A}_+(X)$, $i=1, ..., m$. As the second constraint of \eqref{ggf}  gives $\alpha_i$ explicitly, the above problem can be written as 
\begin{align} \label{L00}
\nonumber \max \  \ & \ell \\
 \   s.t.\  \ &
 \frac{1}{2}
 \begin{pmatrix}
-A^TY A-A^T Y^TA &  \ A^T(Y b+Y^Tb+y)\\
(Y b+Y^Tb+y)^TA &   \ -2y^Tb-2b ^TY b
\end{pmatrix}
+\begin{pmatrix}
Q & c\\
c^T &  -\ell
\end{pmatrix}
\succeq 0,\\
\nonumber \ & Y \geq 0, \ y\geq 0,
\end{align}
where $Y\in\mathbf{R}^{m\times m}$ and $y\in\mathbf{R}^m$. In the above semi-definite-program, the matrix 
$$
\Gamma_{ij}=\frac{-1}{2}\begin{pmatrix}
A_i^TA_j+ A_j^TA_i & -b_iA_j^T-b_jA_i^T\\
-b_iA_j-b_jA_i &  2b_ib_j
\end{pmatrix}
$$ 
is the coefficient of $y_{ij}$. As the coefficients of $y_{ij}$ and $y_{ji}$ are the same, we can assume that $Y$ is symmetric. So, problem \eqref{L00} is reformulated as 
\begin{align} \label{L0}
\nonumber \max \  \ & \ell \\
 \   s.t.\  \ &
 \begin{pmatrix}
-A^TY A &  \ A^T(Y b+0.5y)\\
(Y b+0.5y)^TA &   \ -y^Tb-b ^TY b
\end{pmatrix}
+\begin{pmatrix}
Q & c\\
c^T &  -\ell
\end{pmatrix}
\succeq 0,\\
\nonumber \ & Y \geq 0, \ y\geq 0, \ Y^T=Y.
\end{align}

\indent
The next lemma shows that, under the boundedness of $X$, problem \eqref{L} is feasible.
\begin{proposition} \label{well}
If $X$ is a polytope, then problem \eqref{L} is feasible.
\begin{proof}
As $X$ is bounded, there exist scalars $f_i$, $i=1, ...,m$, such that $\alpha_i(x)=A_i x+f_i$ is positive on $X$ and $\alpha_i\in \mathcal{A}_+(X)$. Due to the boundedness of $X$, the system $Ad\leq 0$ does not have any non-zero solution. Thus, $H=\sum_{i=1}^{m} A_i^TA_i$ is positive-definite. By choosing $\gamma$ sufficiently large  and a suitable choice of $\ell$,  $\gamma\alpha_i$, $i=1, ...,m$, accompanying $\ell$ satisfy all constraints of problem \eqref{L}.
\end{proof}
\end{proposition}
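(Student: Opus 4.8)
The plan is to exhibit an explicit feasible point, guided by the structure of the first constraint of \eqref{L}. The natural choice is to take each multiplier to be a positive multiple of the affine function whose gradient coincides with the corresponding row of $A$. First I would produce scalars $f_i$ so that $\alpha_i(x)=A_ix+f_i>0$ for every $x\in X$: since $X$ is a polytope, hence compact, the linear function $A_ix$ attains a finite minimum on $X$, and any $f_i>-\min_{x\in X}A_ix$ does the job. Strict positivity on $X$ certainly gives $\alpha_i\in\mathcal{A}_+(X)$, and this membership is preserved under multiplication by any positive scalar, so the second block of constraints in \eqref{L} will hold for these $\alpha_i$ and for all positive rescalings of them.

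Next I would verify that $H:=\sum_{i=1}^{m}A_i^TA_i\succ 0$; this is exactly where boundedness is used. If $Hd=0$ then $0=d^THd=\sum_i(A_id)^2$, forcing $Ad=0$; but then $x+td\in X$ for every $x\in X$ and every $t\in\mathbf{R}$, so $X$ would contain a full line, contradicting boundedness unless $d=0$. Hence $H\succ 0$. (Equivalently, boundedness excludes any nonzero $d$ with $Ad\leq 0$, and in particular any nonzero $d$ with $Ad=0$.)

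Then I would replace each $\alpha_i$ by $\gamma\alpha_i$ for a parameter $\gamma>0$ and study the candidate $q_\gamma(x)=x^TQx+2c^Tx-\ell+\gamma\sum_{i=1}^{m}\alpha_i(x)(A_ix-b_i)$. Expanding $\gamma\sum_i(A_ix+f_i)(A_ix-b_i)$, the purely quadratic part contributes $\gamma\,x^THx$, so the Hessian of $q_\gamma$ equals $2(Q+\gamma H)$. Because $H\succ 0$, taking $\gamma$ large enough makes $Q+\gamma H\succ 0$, so $q_\gamma$ is a strictly convex quadratic attaining a finite minimum. Writing $q_\gamma(x)=x^T(Q+\gamma H)x+2u^Tx+(-\ell+w)$ with $u$ and $w$ independent of $\ell$, the minimum value equals $-\ell+w-u^T(Q+\gamma H)^{-1}u$, which is an affine, strictly decreasing function of $\ell$. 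Choosing $\ell$ sufficiently small (negative) makes this minimum nonnegative, i.e. $q_\gamma\in P[x]$, so the first constraint of \eqref{L} holds as well, yielding a feasible point.

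I expect the only genuine subtlety to be the positive definiteness of $H$: one must invoke boundedness correctly, observing that a nonzero $d$ with $Ad=0$ embeds a whole line in $X$. The remaining steps—selecting $f_i$ to secure positivity, and selecting $\gamma$ and then $\ell$ by a dominance argument—are routine, since once the Hessian $Q+\gamma H$ is positive definite the free parameter $\ell$ can absorb the linear and constant contributions.
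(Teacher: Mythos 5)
Your proposal is correct and follows essentially the same route as the paper: take $\alpha_i(x)=A_ix+f_i$ positive on $X$, use boundedness to get $H=\sum_{i=1}^m A_i^TA_i\succ 0$, scale by a large $\gamma$ to make the quadratic convex, and then push $\ell$ down to make it nonnegative. You have merely filled in the details (the explicit choice of $f_i$, the line-in-$X$ argument for $H\succ 0$, and the closed-form minimum as a decreasing affine function of $\ell$) that the paper leaves implicit.
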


\indent
Proposition \ref{well} does not hold necessarily for QPs with unbounded feasible set. The following example illustrates the point.
\begin{example}
Consider the QP
\begin{align*}
\nonumber \min \  \ & x_1x_2 \\
 \   s.t.\  \ &  x_1=0.
\end{align*}
In this example,  problem \eqref{L} is infeasible while the optimal value is zero.
\end{example}
\indent

 Problem \eqref{L} can be also interpreted  via S-lemma. Let $int(X)\neq \emptyset$. For given $\alpha_i\in \mathcal{A}_+(X)$, $i=1, ..., m$, we have $X\subseteq \{x: \sum_{i=1}^{m} \alpha_i(x)(A_ix-b_i)\leq 0\}$, which is an overestimation of $X$ with a sublevel set of a quadratic function. By S-lemma, the optimal value of $x^TQx+2c^Tx$ on this set is obtained by solving the problem 
 $$
 \{\max \ell: x^TQx+2c^Tx-\ell+t(\sum_{i=1}^{m} \alpha_i(x)(A_ix-b_i))\in P[x], \ t\geq 0\}.
 $$
 As $\mathcal{A}_+(X)$ is a cone, the first constraint of problem \eqref{L} is merely S-lemma and  problem \eqref{L} provides the greatest optimal value $x^TQx+2c^Tx$ on the sublevel sets of quadratic functions which include $X$. \\
\indent
In the sequel, we denote the set of $\{\sum_{i=1}^{m} \alpha_i(x)(A_ix-b_i):  \alpha_i(x)\in \mathcal{A}_+(X)\}$ by $\mathcal{N}(X)$. One can infer from the proof of Proposition \ref{well} that $\mathcal{N}(X)$ is nonempty and involves a strictly convex function. It is easily seen that $\mathcal{N}(X)$ is a closed, convex cone with nonempty interior. Moreover, if the interior of $X$ is nonempty, then $\mathcal{N}(X)$ is pointed, i.e. $\mathcal{N}(X)\cap-\mathcal{N}(X)=\{0\}$.\\
\indent
The cone $\mathcal{N}(X)$ can be applied for obtaining a convex underestimator of  a quadratic function on a given polytope. For \eqref{P}, one can consider $F: X\to \mathbf{R}$, defined by 
 $$
F(x)=\{\max \ x^TQx+2c^Tx+p(x): p\in \mathcal{N}(X), \nabla^2p+2Q\succeq 0\},
$$
as a convex underestimator.
Another problem in which $\mathcal{N}(X)$ might be useful is the approximation of  L\"{o}wner-John ellipsoid  for a given polytope. We refer the interested reader to \cite{Boyd, Nem}  for more details about L\"{o}wner-John ellipsoid.\\
\indent
As we consider a subset of the feasible set of dual problem, the optimal value of  \eqref{L} may be strictly less than that of \eqref{Dn} or equivalently \eqref{P}. So, we can regard problem  \eqref{L}  as a new bound for QPs. The following proposition states this fact.
\begin{proposition} \label{w}
Let $\bar x$ and $\bar{\alpha_i}$ $(i=1, ..., m)$, $\bar\ell $ be feasible for   \eqref{P} and problem \eqref{L}, respectively. Then $\bar x^TQ\bar x+2c^T\bar x\geq \bar \ell$.
\end{proposition}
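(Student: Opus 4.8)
The plan is to establish this weak-duality inequality directly, by evaluating the polynomial appearing in the first constraint of \eqref{L} at the feasible point $\bar x$. The essential observation is that each product term $\bar\alpha_i(\bar x)(A_i\bar x - b_i)$ is nonpositive, so the summation can only lower the value of the polynomial; since the polynomial is globally nonnegative, the desired bound drops out. This is precisely the ``weak duality'' half accompanying the restriction of \eqref{Dn} to \eqref{L}.

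First I would record the two sign facts driving the argument. Since $\bar x$ is feasible for \eqref{P}, we have $\bar x \in X$, that is $A_i\bar x - b_i \leq 0$ for every $i$. Since each $\bar\alpha_i \in \mathcal{A}_+(X)$ is by definition nonnegative on $X$ and $\bar x \in X$, we also have $\bar\alpha_i(\bar x) \geq 0$. Multiplying a nonnegative number by a nonpositive one, each term $\bar\alpha_i(\bar x)(A_i\bar x - b_i)$ is nonpositive, and hence $\sum_{i=1}^m \bar\alpha_i(\bar x)(A_i\bar x - b_i) \leq 0$.

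Next I would invoke the first constraint of \eqref{L}, which states that the polynomial $x^TQx + 2c^Tx - \bar\ell + \sum_{i=1}^m \bar\alpha_i(x)(A_ix - b_i)$ lies in $P[x]$, i.e.\ is nonnegative on all of $\mathbf{R}^n$. Evaluating at $x = \bar x$ yields
$$\bar x^TQ\bar x + 2c^T\bar x - \bar\ell + \sum_{i=1}^m \bar\alpha_i(\bar x)(A_i\bar x - b_i) \geq 0.$$
Combining this with the nonpositivity of the sum gives $\bar x^TQ\bar x + 2c^T\bar x - \bar\ell \geq -\sum_{i=1}^m \bar\alpha_i(\bar x)(A_i\bar x - b_i) \geq 0$, which rearranges to the claimed inequality $\bar x^TQ\bar x + 2c^T\bar x \geq \bar\ell$.

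There is essentially no hard step here: the proposition is a clean weak-duality statement, and the whole argument reduces to a single evaluation plus a sign count. The only point requiring any care is to ensure that the membership $\bar\alpha_i \in \mathcal{A}_+(X)$ is exploited at a point of $X$ -- which is exactly what feasibility of $\bar x$ for \eqref{P} guarantees -- so that the nonnegativity of the multipliers is legitimately applicable. Notably, neither Lemma \ref{L1n} nor the strong-duality Theorem \ref{SD} is needed; this direction is elementary and holds for any feasible pair.
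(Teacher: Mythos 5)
Your argument is correct and coincides exactly with the paper's proof: evaluate the nonnegative polynomial from the first constraint at $\bar x$, then observe that $\sum_{i=1}^{m}\bar\alpha_i(\bar x)(A_i\bar x-b_i)\leq 0$ because each $\bar\alpha_i$ is nonnegative on $X$ while $A_i\bar x-b_i\leq 0$. Nothing is missing.
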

\begin{proof}
The first constraint of problem \eqref{L} implies $\bar x^TQ\bar x+2c^T\bar x-\bar\ell+\sum_{i=1}^{m} \bar\alpha_i(\bar x)(A_i\bar x-b_i)\geq 0$. 
Since $-\sum_{i=1}^{m} \bar\alpha_i(\bar x)(A_i\bar x-b_i)\geq 0$, we get $\bar x^TQ\bar x+2c^T\bar x\geq \bar \ell$, which is the desired inequality.
\end{proof}

\indent
The next theorem provides  sufficient conditions under which bound \eqref{L} is exact. Let $I(x)$ denote active constraints at $x$, i.e. $I(x)=\{i: A_ix=b_i\}$.
\begin{theorem}\label{hgg}
Let $X$ be a polytope. The optimal values of \eqref{P} and problem \eqref{L} are the same if  there exist $\bar x\in argmin_{x\in X} x^TQx+2c^Tx$ and $d_i^Tx+f_i\in \mathcal{A}_+(X)$ for $i=1, 2, ..., m$ such that
$$
Q+\frac{1}{2}\sum_{i=1}^md_iA_i+\frac{1}{2}\sum_{i=1}^mA_i^Td_i^T\succeq 0,
$$
$$
Q\bar x+c+\sum_{i\in I(\bar x)}(d_i^T\bar x +f_i) A_i^T +\sum_{i\in \{1, 2, ..., m\}\setminus I(\bar x)}(A_i\bar x -b_i) d_i =0,
$$
and
$$
d_i^T\bar x +f_i=0, \ \forall i\in \{1, 2, ..., m\}\setminus I(\bar x).
$$
\begin{proof}
Consider the quadratic function $q(x)=x^TQx+2c^Tx+\sum_{i=1}^{m} (d_i^Tx+f_i)(A_ix-b_i)$. The first condition implies that $q$ is convex.  We can infer from the second and third conditions that $\nabla q(\bar x)=0$. By the sufficient optimality conditions for convex quadratic functions,  $\bar x$ is an optimal solution problem $\{\min q(x): x\in \mathbf{R}^n\}$ with the optimal value $\bar x^TQ\bar x+2c^T\bar x$. Additionally, $\alpha_i(x)=d_i^Tx+f_i$, $i=1, 2, ..., m$, and $\ell=\bar x^TQ\bar x+2c^T\bar x$ are feasible for  problem \eqref{L}. By virtue of  Proposition  \ref{w}, the optimal value of both problems are equal and the proof is complete.
\end{proof}
\end{theorem}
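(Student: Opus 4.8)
The plan is to exhibit an explicit feasible point of problem \eqref{L} whose objective value equals the optimal value $q^{\star}$ of \eqref{P}. Since Proposition \ref{w} already guarantees that every feasible value of \eqref{L} underestimates $q^{\star}$, producing such a point forces equality. The natural candidate is dictated by the hypotheses: take the multipliers $\alpha_i(x)=d_i^Tx+f_i$ and set $\ell=\bar x^TQ\bar x+2c^T\bar x$. The three conditions are tailored precisely so that this choice is admissible.

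To organize the argument I would introduce the single quadratic $q(x)=x^TQx+2c^Tx+\sum_{i=1}^m (d_i^Tx+f_i)(A_ix-b_i)$, which is exactly the polynomial appearing in the first constraint of \eqref{L} before subtracting $\ell$. The first step is to read off convexity: differentiating twice gives $\nabla^2 q = 2Q+\sum_i (d_iA_i+A_i^Td_i^T)$, so the first hypothesis is nothing but $\nabla^2 q\succeq 0$. The second step is the first-order condition $\nabla q(\bar x)=0$. By the product rule, $\nabla q(x)=2Qx+2c+\sum_i\big[(A_ix-b_i)d_i+(d_i^Tx+f_i)A_i^T\big]$. Evaluating at $\bar x$, I would split the sum over $I(\bar x)$ and its complement: on $I(\bar x)$ the factor $A_i\bar x-b_i$ vanishes, while on the complement the third hypothesis $d_i^T\bar x+f_i=0$ annihilates the other term. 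What survives is exactly twice the expression set to zero in the second hypothesis, hence $\nabla q(\bar x)=0$.

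With $q$ convex and stationary at $\bar x$, the sufficient optimality condition for convex quadratics makes $\bar x$ a global minimizer of $q$ on $\mathbf{R}^n$. The final step evaluates the minimum value: at $\bar x$ every product $(d_i^T\bar x+f_i)(A_i\bar x-b_i)$ vanishes, again by the active/inactive dichotomy, so $q(\bar x)=\bar x^TQ\bar x+2c^T\bar x=q^{\star}$. Global minimality then reads $q(x)-q^{\star}\geq 0$ for all $x$, which is precisely the statement $x^TQx+2c^Tx-\ell+\sum_i\alpha_i(x)(A_ix-b_i)\in P[x]$ with $\ell=q^{\star}$. Since each $\alpha_i\in\mathcal{A}_+(X)$ by hypothesis, the pair $(\alpha_i,\ell)$ is feasible for \eqref{L}, so its optimal value is at least $q^{\star}$; Proposition \ref{w} supplies the reverse inequality, giving equality.

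I expect the only genuine obstacle to be the bookkeeping in the gradient and value computations: the partition of $\{1,\dots,m\}$ into $I(\bar x)$ and its complement must be matched correctly against the two summands produced by the product rule, and it is exactly this matching that explains why the hypotheses are stated with two separate sums. Everything else—convexity from the Hessian, global optimality of a stationary point of a convex function, and the appeal to Proposition \ref{w}—is routine.
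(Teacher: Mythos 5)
Your proof is correct and follows essentially the same route as the paper's: define $q(x)=x^TQx+2c^Tx+\sum_i(d_i^Tx+f_i)(A_ix-b_i)$, read convexity from the first condition, stationarity at $\bar x$ from the second and third via the active/inactive split, conclude that $\bar x$ globally minimizes $q$ with value $q^\star$, and combine feasibility of $(\alpha_i,\ell)$ for \eqref{L} with Proposition \ref{w}. One small caveat: the surviving gradient terms at $\bar x$ are $2(Q\bar x+c)+S$ while the second hypothesis asserts $(Q\bar x+c)+S=0$, so your phrase ``exactly twice the expression set to zero'' is not literally accurate — but this factor-of-$2$ bookkeeping wrinkle is inherited from the paper's own statement of the second condition (which, by analogy with the $\tfrac{1}{2}$ factors in the first condition, presumably intends $Q\bar x+c+\tfrac{1}{2}S=0$) and does not reflect any difference in approach.
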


\indent 
We notice that checking the sufficient conditions provided in  Theorem \ref{hgg} is not difficult. In fact, it can be done by testing the feasibility of a semi-definite program, which there are polynomial time algorithms to do it.  Under convexity, the above theorem holds for any optimal solution, so the bound is exact in the convex case. However, in general, the assumptions in Theorem \ref{hgg} may not hold.  \\
\indent 
Another point concerning Theorem \ref{hgg} is that it provides sufficient conditions for global optimality. Some sufficient global optimality conditions by virtue of semi-definite relaxation can be found in the literature, see e.g., \cite{Zheng} and the references therein. As we will see in the sequel the dual of  problem \eqref{L} is a semi-definite relaxation of  \eqref{P}. So, most of the derived results with a little modification can be applied to problem \eqref{L}. We refer the interested reader to 
\cite{Bomze2, Floudas } for some necessary and sufficient global optimality conditions for QPs.\\
\indent 
In general, when the feasible set of a semi-definite program is unbounded, it is probable that it does not realize its optimal value \cite{Nem}. In the following proposition, we prove that problem \eqref{L} achieves its optimal value while the feasible set of \eqref{L} may be unbounded.
\begin{proposition} \label{popt}
If $X$ be a polytope, then problem \eqref{L} achieves its optimal value.
\begin{proof}
Without loss of generality, we can assume $int(X)\neq\emptyset$. Otherwise, it is enough to consider the polytope on the affine space generated by itself.  \\
By Proposition \ref{w}, the optimal value of  problem \eqref{L}, denoted by $\bar\ell$, is finite.  There exist sequences $\{\alpha_i^k\}\subseteq \mathcal{A}_+(X)$,$i=1, ..., m$,  and $\{\ell^k\}$
such that $\ell^k\to \bar\ell$ and $x^TQx+2c^Tx-\ell^k+\sum_{i=1}^{m} \alpha_i^k(x)(A_ix-b_i)\in P[x]$. If the sequences $\{\alpha_i^k\}$, $i=1, ..., m$, are bounded, then due to the closedness of $P[x]$ and $\mathcal{A}_+(X)$, the proof is complete. Otherwise, by setting $\mu_i^k=\frac{\alpha_i^k}{t_k}$ for $i=1, ..., m$ with $t_k=\max_{1\leq i\leq m}\|\alpha_i^k\|$, and choosing appropriate subsequences if necessary, we can assume that $\mu_i^k\to\bar\mu_i$ for $i=1, ..., m$ and there exists $j$ such that 
$\bar\mu_j \neq 0$. Due to the closedness of the set of nonnegative polynomials, we have $q(x)=\sum_{i=1}^{m} \bar\mu_i(x)(A_ix-b_i)\in P[x]$. As $int(X)\neq \emptyset$, there exists some $\bar x\in X$ such that $\bar\mu_j(\bar x)>0$ and $A_j\bar x<b_i$. Therefore,  $q(\bar x)<0$, which contradicts $q\in P[x]$.
\end{proof}
\end{proposition}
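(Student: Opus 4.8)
The plan is to take a maximizing sequence for \eqref{L} and argue that, after possibly extracting a subsequence, either the multipliers converge to a feasible point attaining the supremum, or their normalization produces a nonzero obstruction that contradicts feasibility. First I would reduce to the full-dimensional case: if $int(X)=\emptyset$, I restrict \eqref{P} and \eqref{L} to the affine hull of $X$, where the feasible set becomes full-dimensional, so henceforth I assume $int(X)\neq\emptyset$. By Proposition \ref{w} the optimal value $\bar\ell$ of \eqref{L} is finite, since it is bounded above by the optimal value of \eqref{P}, so a maximizing sequence exists: there are $\alpha_i^k\in\mathcal{A}_+(X)$, $i=1,\dots,m$, and scalars $\ell^k\to\bar\ell$ with $x^TQx+2c^Tx-\ell^k+\sum_{i=1}^m\alpha_i^k(x)(A_ix-b_i)\in P[x]$.

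Next I would split into two cases according to the growth of the multipliers, measured by the norm $\|\cdot\|$ defined on affine functions. If the sequences $\{\alpha_i^k\}$ are bounded, then by passing to a subsequence they converge to some $\alpha_i^\star$, and since both $\mathcal{A}_+(X)$ and the cone $P[x]$ of nonnegative polynomials are closed, the limit $(\alpha_i^\star,\bar\ell)$ is feasible for \eqref{L} and attains the supremum; this case is routine. The delicate case is when $t_k:=\max_{1\leq i\leq m}\|\alpha_i^k\|\to\infty$. Here I would normalize by setting $\mu_i^k:=\alpha_i^k/t_k$, so that $\max_i\|\mu_i^k\|=1$; after extracting a convergent subsequence I obtain limits $\bar\mu_i\in\mathcal{A}_+(X)$ with $\max_i\|\bar\mu_i\|=1$, so that some $\bar\mu_j\neq 0$.

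The key step is to pass to the limit in the normalized constraint. Dividing the membership $x^TQx+2c^Tx-\ell^k+\sum_i\alpha_i^k(x)(A_ix-b_i)\in P[x]$ by $t_k$ and letting $k\to\infty$, the quadratic part together with the scalar $\ell^k/t_k\to 0$ vanishes, and closedness of $P[x]$ yields $q(x):=\sum_{i=1}^m\bar\mu_i(x)(A_ix-b_i)\in P[x]$, a nonzero element of $\mathcal{N}(X)$ that is nonnegative on all of $\mathbf{R}^n$. I then exploit $int(X)\neq\emptyset$ to contradict this. Since $\bar\mu_j$ is affine, nonnegative on the full-dimensional set $X$, and not identically zero, its zero set is a hyperplane that cannot contain the open set $int(X)$, so $\bar\mu_j$ is strictly positive at some $\bar x\in int(X)$; at such a point every factor $A_i\bar x-b_i<0$, so each summand $\bar\mu_i(\bar x)(A_i\bar x-b_i)\leq 0$ while the $j$-th summand is strictly negative, giving $q(\bar x)<0$ and contradicting $q\in P[x]$. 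This eliminates the unbounded case and completes the argument.

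The main obstacle is the unbounded case, and within it the homogenization device: one must verify that dividing by $t_k$ really does send the objective-dependent terms to zero, which is precisely where finiteness of $\bar\ell$ from Proposition \ref{w} is essential, and that the limiting normalized multipliers remain a nonzero, $X$-nonnegative tuple, so that the interior-point contradiction can be triggered. The closedness of both $\mathcal{A}_+(X)$ and $P[x]$, available via the Farkas characterization of $\mathcal{A}_+(X)$ and the positive semidefinite characterization of nonnegative quadratics, is what makes the extracted limit feasible in each case.
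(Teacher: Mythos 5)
Your proposal is correct and follows essentially the same route as the paper's own proof: reduce to the full-dimensional case, use Proposition \ref{w} for finiteness, take a maximizing sequence, and in the unbounded case normalize by $t_k=\max_i\|\alpha_i^k\|$ to extract a nonzero limit $\bar\mu_j$ whose strict positivity at an interior point of $X$ contradicts $q\in P[x]$. The only difference is that you spell out a couple of steps the paper leaves implicit (why $\ell^k/t_k\to 0$ and why $\bar\mu_j$ must be positive somewhere on $int(X)$), which is a welcome clarification rather than a deviation.
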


\indent
 It follows from   the proof of Proposition \ref{popt}  that all optimal solutions of  problem \eqref{L} are bounded when $int(X)\neq \emptyset$. One important question may arise about \eqref{L} is that: "Is the optimal value of problem \eqref{L} independent of the representation of $X$?". Next theorem gives the affirmative answer to the question.
\begin{theorem} \label{ind}
Let $\{x\in \mathbf{R}^n: \bar{A}x\leq \bar{b}\}$ and $\{x\in \mathbf{R}^n: \hat{A}x\leq \hat{b}\}$ be two different  representations of the polytope $X$. Then the optimal values of problem \eqref{L} corresponding to these representations are equal.
\begin{proof}
Suppose that $\hat\ell$ and $\bar\ell$ are the optimal values of problem \eqref{L} corresponding to the representations $\{x\in \mathbf{R}^n: \hat{A}x\leq \hat{b}\}$ and 
$\{x\in \mathbf{R}^n: \bar{A}x\leq \bar{b}\}$, respectively, with $\hat A \in \mathbf{R}^{\hat m\times n}$ and  $\bar A \in \mathbf{R}^{\bar m\times n}$. By Proposition \ref{popt}, there exist $\hat{\alpha_i}$ $(i=1, ..., \hat m)$ which are optimal for
\begin{equation}\label{Ln}
\begin{array}{ll}
 \  & \max \ell\\
 \  & s.t.  \   x^TQx+2c^Tx -\ell+\sum_{i=1}^{\hat m} \alpha_i(x)(\hat{A_i}x-\hat{b_i})\in P[x],\\
\  & \ \  \ \ \ \alpha_i\in \mathcal{A}_+(X), \ i=1, ..., \hat m.
\end{array}
\end{equation}
As  $\hat{b_i}-\hat{A_i}x\in \mathcal{A}_+(X)$, there exist nonnegative scalars $Y_{ij}$, 
$j=0, 1, ..., \bar m$, such that
$$
\hat{A_i}x-\hat{b_i}=\sum_{j=1}^{\bar m} Y_{ij}( \bar{A_j}x-\bar{b_j})- Y_{i0}.
$$
Similarly, according to $\hat{\alpha_i}\in \mathcal{A}_+(X)$, there exist nonnegative scalars $W_{ij}$, $j=0, 1, ..., \bar m$, with
$\hat{\alpha_i}(x)=\sum_{j=1}^{\bar m} -W_{ij}( \bar{A_j}x-\bar{b_j})+ W_{i0}$.
By the first constraint of problem \eqref{Ln}, we get
\begin{align*}
& x^TQx+2c^Tx-\hat{\ell}+\sum_{i=1}^{\hat m} \hat{\alpha_i}(x)\Big(\sum_{j=1}^{\bar m}Y_{ij}( \bar{A_j}x-\bar{b_j})- Y_{i0}\Big)=\\
& x^TQx+2c^Tx-\hat{\ell}+\sum_{j=1}^{\bar m} ( \bar{A_j}x-\bar{b_j})\Big(\sum_{i=1}^{\hat m} Y_{ij}\hat{\alpha_i}(x)\Big)-\sum_{i=1}^{\hat m}  Y_{i0}\hat{\alpha_i}(x)=\\
& x^TQx+2c^Tx-(\hat{\ell}+\sum_{i=1}^{\hat m}  Y_{i0}W_{i0})+\sum_{j=1}^{\bar m} ( \bar{A_j}x-\bar{b_j})\Big(\sum_{i=1}^{\hat m} (Y_{ij}\hat{\alpha_i}(x)+W_{ij}Y_{i0})\Big)\\ & \in P[x].
\end{align*}
As $\mathcal{A}_+(X)$ is a convex cone,  $\tilde \alpha_i(x)=\sum_{i=1}^{\hat m} (Y_{ij}\hat{\alpha_i}(x)+W_{ij}Y_{i0})\in \mathcal{A}_+(X)$ for $i=1, ..., \bar m$. So, $\tilde \ell=\hat{\ell}+\sum_{i=1}^{\hat m}  Y_{i0}W_{i0}$ and $\tilde \alpha_i$, $i=1, ..., \bar m$, are feasible for problem \eqref{L} corresponding to the representation
$X=\{x\in \mathbf{R}^n: \bar{A}x\leq \bar{b}\}$. This implies $\bar\ell\geq \hat\ell$, because $Y_{i0}W_{i0}\geq 0$.  
 By a similar argument, one can establish $\hat\ell\geq \bar\ell$. Therefore,  $\bar\ell= \hat\ell$ and the proof is complete.
\end{proof}
\end{theorem}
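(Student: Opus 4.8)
The plan is to prove the two inequalities $\bar\ell \geq \hat\ell$ and $\hat\ell \geq \bar\ell$ separately; by the symmetry of the two representations, only the first needs a full argument. The conceptual point I would stress at the outset is that the cone $\mathcal{A}_+(X)$ of affine functions nonnegative on $X$ is an intrinsic object attached to the \emph{set} $X$, not to any particular system of inequalities defining it. Consequently, in problem \eqref{L} the only ingredients that actually see the representation are the affine ``pieces'' $A_i x - b_i$ appearing inside the first (nonnegativity) constraint. The whole proof therefore reduces to rewriting these pieces from the $\hat A$-representation in terms of the $\bar A$-representation while keeping every multiplier inside $\mathcal{A}_+(X)$.

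First I would invoke Proposition \ref{popt} to fix an optimal solution $\hat\alpha_i \in \mathcal{A}_+(X)$, $i = 1, \dots, \hat m$, for the $\hat A$-version of \eqref{L}, together with its optimal value $\hat\ell$. The two structural facts I would use are each a direct consequence of the non-homogeneous Farkas characterization of $\mathcal{A}_+(X)$ (Theorem 8.4.2 in \cite{Mang}). Since $\hat A_i x \leq \hat b_i$ on $X$, the function $\hat b_i - \hat A_i x$ lies in $\mathcal{A}_+(X)$ and hence expands as $\sum_{j=1}^{\bar m} Y_{ij}(\bar b_j - \bar A_j x) + Y_{i0}$ with all $Y_{ij}\geq 0$; equivalently $\hat A_i x - \hat b_i = \sum_{j=1}^{\bar m} Y_{ij}(\bar A_j x - \bar b_j) - Y_{i0}$. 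Likewise, because $\hat\alpha_i \in \mathcal{A}_+(X)$, I expand $\hat\alpha_i(x) = -\sum_{j=1}^{\bar m} W_{ij}(\bar A_j x - \bar b_j) + W_{i0}$ with all $W_{ij}\geq 0$.

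The core computation is then to substitute the first expansion into the sum $\sum_i \hat\alpha_i(x)(\hat A_i x - \hat b_i)$ inside the first constraint, regroup the resulting double sum by the index $j$ of the $\bar A$-pieces, and finally rewrite the leftover term $-\sum_i Y_{i0}\hat\alpha_i(x)$ using the second expansion. This rearrangement leaves a nonnegative polynomial of exactly the shape demanded by the $\bar A$-version of \eqref{L}, with new multipliers $\tilde\alpha_j(x) = \sum_{i=1}^{\hat m}\bigl(Y_{ij}\hat\alpha_i(x) + Y_{i0}W_{ij}\bigr)$ for $j = 1, \dots, \bar m$ and a shifted value $\tilde\ell = \hat\ell + \sum_{i=1}^{\hat m} Y_{i0}W_{i0}$. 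Two checks close this direction: each $\tilde\alpha_j$ lies in $\mathcal{A}_+(X)$ because it is a nonnegative combination of the $\hat\alpha_i \in \mathcal{A}_+(X)$ and nonnegative constants, and $\mathcal{A}_+(X)$ is a convex cone containing the nonnegative constants; and $\tilde\ell \geq \hat\ell$ since each product $Y_{i0}W_{i0}$ is nonnegative. Hence $(\tilde\alpha_j,\tilde\ell)$ is feasible for the $\bar A$-version, giving $\bar\ell \geq \tilde\ell \geq \hat\ell$.

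Interchanging the roles of the two representations yields $\hat\ell \geq \bar\ell$ by the identical argument, and the two inequalities together give $\bar\ell = \hat\ell$. I expect the only delicate step to be the bookkeeping in the double-sum regrouping, and in particular ensuring that the stray $-\sum_i Y_{i0}\hat\alpha_i(x)$ term is correctly split into a part that augments each $\tilde\alpha_j$ (through the $Y_{i0}W_{ij}$ contributions) and a pure constant part that shifts the objective by $\sum_i Y_{i0}W_{i0}$. Everything else is a routine appeal to the cone structure of $\mathcal{A}_+(X)$ and to the representation facts already established in this section.
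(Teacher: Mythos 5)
Your proposal is correct and follows essentially the same route as the paper's proof: fix an optimal solution via Proposition \ref{popt}, expand both $\hat b_i - \hat A_i x$ and $\hat\alpha_i$ through the non-homogeneous Farkas characterization of $\mathcal{A}_+(X)$, regroup the double sum by the $\bar A$-pieces to obtain the multipliers $\tilde\alpha_j$ and the shifted value $\tilde\ell$, and conclude by symmetry. No gaps.
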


\indent
Similar to the  proof of Theorem \ref{ind}, one can prove that if polytope $X_1$ is a subset of polytope $X_2$, then $opt(X_1, Q, c)\leq opt(X_2, Q, c)$. (Let $opt(X, Q, c)$ denote  the optimal value of problem \eqref{L} corresponding to polytope $X$, matrix $Q$ and vector $c$). We call this property as inclusion property. In the following proposition, we show that the optimal value of problem \eqref{L} is constant under invertible affine transformation.
\begin{proposition} \label{tra}
The optimal value of problem \eqref{L} is invariant under affine invertible transformations.
\begin{proof}
Let $T$ be an invertible affine transformation on $\mathbf{R}^n$. We set $Y=T^{-1}(X)$ and assume that $T(y)=Hy+h$ for some invertible matrix $H$ and $h\in\mathbf{R}^n$. For $\alpha_i\in \mathcal{A}_+(X)$, $i=1, ..., m$, and $\ell$ satisfying 
$$
 x^TQx+2c^Tx -\ell+\sum_{i=1}^{m} \alpha_i(x)(A_ix-b_i)\in P[x],
$$
we have
$$
y^T\bar Qy+2\bar c^Ty+c_0 -\ell+\sum_{i=1}^{m} \mu_i(y)(\bar A_iy-\bar b_i)\in P[y],
$$
where $\mu_i=\alpha_iT\in \mathcal{A}_+(Y)$, $\bar Q=H^TQH$, $\bar c=Hc+H^TQh$, $c_0=h^TQh+2c^Th$, $\bar A=AH$ and $\bar b=b-Hh$. This statement implies  
$opt(X, Q, c) \leq opt(Y, \bar Q, \bar c) -c_0$. By the same argument, one can derive that  $opt(Y, \bar Q, \bar c) -c_0  \leq opt(X, Q, c)$. This completes the proof.
\end{proof}
\end{proposition}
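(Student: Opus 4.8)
The plan is to reduce the claim to the observation that an invertible affine change of variables $x = T(y) = Hy + h$ acts bijectively on every object appearing in problem \eqref{L}, while merely shifting the objective $x^TQx + 2c^Tx$ by an additive constant $c_0 = h^TQh + 2c^Th$. First I would record the transformed data: substituting $x = Hy+h$ gives $x^TQx + 2c^Tx = y^T\bar Q y + 2\bar c^T y + c_0$ with $\bar Q = H^TQH$ and $\bar c = H^T(Qh + c)$, while each linear form transforms as $A_i x - b_i = \bar A_i y - \bar b_i$ with $\bar A = AH$ and $\bar b = b - Ah$. In particular $Y := T^{-1}(X) = \{y : \bar A y \le \bar b\}$ is again a polytope, so problem \eqref{L} on $Y$ with data $(\bar Q, \bar c)$ is feasible by Proposition \ref{well} and attains its value by Proposition \ref{popt}.

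The two structural facts that drive the argument are: (i) composition with $T$ is a bijection between $\mathcal{A}_+(X)$ and $\mathcal{A}_+(Y)$, since $T$ is an affine bijection of $\mathbf{R}^n$ carrying $Y$ onto $X$, so an affine $\alpha$ is nonnegative on $X$ exactly when $\mu := \alpha \circ T$ is affine and nonnegative on $Y$; and (ii) global nonnegativity is preserved, i.e. a quadratic $p \in P[x]$ if and only if $p \circ T \in P[y]$, because $T(y)$ ranges over all of $\mathbf{R}^n$ as $y$ does. Granting these, I would take any feasible triple $(\ell, \alpha_1, \dots, \alpha_m)$ of \eqref{L} on $X$ and substitute $x = T(y)$ into the first constraint. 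The certificate
$$
x^TQx + 2c^Tx - \ell + \sum_{i=1}^m \alpha_i(x)(A_i x - b_i) \in P[x]
$$
becomes, after collecting the constant term,
$$
y^T\bar Q y + 2\bar c^T y - (\ell - c_0) + \sum_{i=1}^m \mu_i(y)(\bar A_i y - \bar b_i) \in P[y],
$$
with $\mu_i = \alpha_i \circ T \in \mathcal{A}_+(Y)$. Hence $(\ell - c_0, \mu_1, \dots, \mu_m)$ is feasible for \eqref{L} on $Y$, so $opt(Y, \bar Q, \bar c) \ge \ell - c_0$; maximizing over feasible $\ell$ yields $opt(X, Q, c) \le opt(Y, \bar Q, \bar c) + c_0$.

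Finally, applying the identical construction to the inverse transformation $T^{-1}$ (whose associated constant is $-c_0$) produces the reverse inequality $opt(Y, \bar Q, \bar c) + c_0 \le opt(X, Q, c)$, and combining the two gives $opt(X, Q, c) = opt(Y, \bar Q, \bar c) + c_0$. Since the objective itself picks up exactly the same constant $c_0$ under the change of variables, this is precisely the statement that the bound, read as an estimate of $\min_{x\in X} x^TQx + 2c^Tx$, is unchanged by the transformation, which is the asserted invariance. I expect the only delicate point to be the bookkeeping of the constant term: one must track the $c_0$ produced by the quadratic objective together with the constants absorbed into the $\mu_i$ through fact (i), and verify that they cancel consistently when the two opposite inequalities are combined; the remainder is routine substitution supported by (i) and (ii).
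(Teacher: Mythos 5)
Your proof is correct and follows essentially the same route as the paper: pull back a feasible certificate through the substitution $x = T(y)$, note that composition with $T$ maps $\mathcal{A}_+(X)$ onto $\mathcal{A}_+(Y)$ and preserves membership in $P[x]$, and combine the inequality with its counterpart for $T^{-1}$. In fact your bookkeeping is cleaner than the paper's: your formulas $\bar c = H^T(Qh+c)$, $\bar b = b - Ah$, and the inequality $opt(X,Q,c) \le opt(Y,\bar Q,\bar c) + c_0$ are the correct versions of what the paper writes (with apparent typos) as $\bar c = Hc + H^TQh$, $\bar b = b - Hh$, and $opt(X,Q,c) \le opt(Y,\bar Q,\bar c) - c_0$.
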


\indent
The following proposition states if a polytope is singleton, then bound \eqref{L} is exact.
\begin{proposition} \label{sing}
If $X=\{\bar x\}$, then the optimal values of \eqref{P} and \eqref{L} are equal.
\begin{proof}
According to  Proposition \ref{tra}, the optimal value of problem \eqref{L} is independent of translation. So,  without loss of generality $\bar x=0$. In addition, by virtue of  Theorem \ref{ind}, we assume $X=\{x: x=0\}$. We define $\alpha_i, \mu_i\in \mathcal{A}_+(X)$ for $i=1, ..., n$,  corresponding to  \eqref{P}, as follows
\[
\alpha_i(x)=\begin{cases}
      0 & c_i\geq 0 \\
      (-Qx)_i-2c_i &  c_i < 0 \\
   \end{cases}
  \quad \mu_i(x)=\begin{cases}
      (Qx)_i+2c_i & c_i\geq 0 \\
      0 & c_i <  0 \\
   \end{cases}
\]
 Thus, $x^TQx+2c^Tx +\sum_{i=1}^{m} \alpha_i(x)(x_i)+\sum_{i=1}^{m} \mu_i(x)(-x_i)=0$. On account of Proposition \ref{w}, the optimal value of \eqref{L} is zero, which is the desired conclusion.
\end{proof}
\end{proposition}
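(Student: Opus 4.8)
The plan is to combine weak duality (Proposition \ref{w}) with an explicit construction of a dual-feasible point that attains the primal value. Since $X=\{\bar x\}$ is a singleton, the primal optimum of \eqref{P} is simply $\bar x^TQ\bar x+2c^T\bar x$, and Proposition \ref{w} already guarantees that the optimal value of \eqref{L} cannot exceed this. Thus the whole task reduces to producing feasible multipliers $\alpha_i\in\mathcal{A}_+(X)$ and a value $\ell$ equal to the primal optimum.

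First I would normalize the problem. By Proposition \ref{tra} the bound is invariant under invertible affine transformations, so translating by $-\bar x$ I may assume $\bar x=0$; the primal value then becomes $0$, and it remains to show that $\ell=0$ is dual-feasible. By Theorem \ref{ind} the optimal value of \eqref{L} does not depend on the chosen inequality description of $X$, so I am free to represent the single point $\{0\}$ by the $2n$ inequalities $x_i\le 0$ and $-x_i\le 0$ for $i=1,\dots,n$ (with right-hand side $b=0$). With this representation an affine function lies in $\mathcal{A}_+(X)$ as soon as its value at the origin is nonnegative, which makes checking feasibility of the multipliers immediate.

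The heart of the argument is to choose the multipliers so that the quadratic remainder collapses to the zero polynomial. For the constraint $x_i\le 0$ I would take the multiplier to be $0$ when $c_i\ge 0$ and $-(Qx)_i-2c_i$ when $c_i<0$; for the constraint $-x_i\le 0$ I would take $(Qx)_i+2c_i$ when $c_i\ge 0$ and $0$ when $c_i<0$. A short computation shows that, for each index $i$, the net contribution of the two associated terms equals $-(Qx)_i x_i-2c_i x_i$ regardless of the sign of $c_i$; summing over $i$ cancels $x^TQx+2c^Tx$ exactly, so that
$$
x^TQx+2c^Tx+\sum_{i}\alpha_i(x)\,x_i+\sum_{i}\mu_i(x)\,(-x_i)=0\in P[x].
$$
Hence $\ell=0$ together with these multipliers is feasible for \eqref{L}, giving optimal value at least $0$.

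The one point requiring care---and the step I expect to be the genuine (if modest) obstacle---is verifying that these multipliers actually belong to $\mathcal{A}_+(X)$: this is precisely where the sign case-split on $c_i$ is forced, since evaluating each multiplier at the origin yields $0$, $-2c_i$, or $2c_i$, all of which are nonnegative exactly under the stated cases. Once feasibility is confirmed, combining the lower bound $0$ with the weak-duality upper bound from Proposition \ref{w} yields equality of the two optimal values and completes the proof.
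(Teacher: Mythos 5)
Your proposal is correct and matches the paper's own argument essentially step for step: the same normalization via Proposition \ref{tra} and Theorem \ref{ind} to reduce to $X=\{0\}$ with the representation $x_i\le 0$, $-x_i\le 0$, the same sign-split multipliers making the remainder the zero polynomial, and the same appeal to Proposition \ref{w} for the matching upper bound. The only (welcome) addition is that you explicitly verify membership of the multipliers in $\mathcal{A}_+(X)$ by evaluating at the origin, a check the paper leaves implicit.
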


\indent
Since \eqref{L} is a convex optimization problem, it is natural to ask about its dual.  To get the dual of problem \eqref{L}, we consider problem \eqref{L0}.  The dual of problem \eqref{L0} is formulated as follows,
\begin{align}\label{DD}
\nonumber \  \min \ &
  \begin{pmatrix}
Q & c\\
c^T &  0
\end{pmatrix}
\bullet
\begin{pmatrix}
X & x\\
x^T &  x_0
\end{pmatrix}
\\
 \    s.t.\   & \  
  \begin{pmatrix}
A_i^TA_j & -b_jA_i^T\\
-b_jA_i &  -b_ib_j
\end{pmatrix}
\bullet
\begin{pmatrix}
X & x\\
x^T &  x_0
\end{pmatrix}\geq 0, \  i\leq j= 1, ..., n\\
\nonumber \  & 
  \begin{pmatrix}
0& -.5A_i^T\\
-.5A_i &  b_i
\end{pmatrix}
\bullet
\begin{pmatrix}
X & x\\
x^T &  x_0
\end{pmatrix}\geq 0, \  i= 1, ..., n\\
\nonumber \  & 
  \begin{pmatrix}
0 & 0\\
0 &  1
\end{pmatrix}
\bullet
\begin{pmatrix}
X & x\\
x^T &  x_0
\end{pmatrix}=1,\\
\nonumber \  & \ \ \ 
\begin{pmatrix}
X & x\\
x^T &  x_0
\end{pmatrix}\succeq 0.
\end{align}
  By $A_i^TA_j\bullet X=A_iXA_j^T$ and Shor decomposition, the  dual may be rewritten as 
 \begin{align}\label{DD0}
\nonumber \  \min \ & Q\bullet X +2c^Tx \\
 \    s.t.\   & \ AXA^T-b^TAx-x^TA^Tb\geq bb^T,\\
\nonumber \  &  Ax\leq b \\
\nonumber \  & \ \ \  X \succeq xx^T.
\end{align}
Problem \eqref{DD0} is a semi-definite relaxation of  \eqref{P}. Indeed, let $\bar{x}$ be a feasible point of  \eqref{P}, It is easily seen  that the matrix
$\begin{pmatrix}
\bar{x}\bar{x}^T & \bar{x}\\
\bar{x}^T &  1
\end{pmatrix}$
is feasible for \eqref{DD0} and $\bar x^TQ\bar x+2c^T\bar x=Q\bullet \bar x^T \bar x +2c^T \bar x$. \\
 \indent 
 It is worth noting that as problem \eqref{L} is feasible for any quadratic function, so the semi-definite program \eqref{L} is strongly feasible. By Theorem 3.2.6 in \cite{Ren}, the strong duality holds, that is, the optimal values of both problems \eqref{L} and \eqref{DD0} are equal. \\
 \indent
 As  problem \eqref{DD0} is the dual of problem  \eqref{L}, it follows from Proposition  \ref{w} that it is bounded from below, and there is no need to add more constraints. In general,  semidefinite relaxations are not necessarily  bounded from below \cite{Gorge}.  This relaxation is also called a strong relaxation by some scholars \cite{Bur3}. One may wonder to know about the relationship between this relaxation with other semidefinite relaxations existing in the literature. It is seen that problem \eqref{DD} is Shor's relaxation of 
\begin{equation*}
\begin{array}{ll}
 \  \min & \ x^TQx+2c^Tx
\\   s.t. &\ -Ax\geq -b,\\
&   \  (A_ix-b_i)(A_jx-b_j)\geq0, \ i\leq j=1, ..., m
\end{array}
\end{equation*}
which is exactly \eqref{P} with $0.5m(m+1)$ redundant constraints \cite{Shor}. Most relaxation methods, including RLT, use redundant constraints \cite{Sherali}. Gorge et al.  \cite{Gorge} use convex combination of these redundant constraints as a cut for semi-definite relaxations. We refer the reader to \cite{Gorge} for more information on the applications of these redundant constraints in QPs. Note that the constraint $Ax\leq b$ is redundant for \eqref{DD0} and so it can be removed  \cite{Sherali}. \\
\indent
 One important inquiry about bounds is  how one can reduce the gap. In this context, one idea may be the replacement of nonnegative affine functions with  nonnegative  convex quadratic functions on the given polytope. Similar to the affine case, a new bound can be formulated as a semi-definite program. Furthermore, one can show that most presented results in Section 2 hold in this case. However, the number of variables is of the order of  $m^3$, and makes this formulation less attractive. In addition, numerical implementations showed that the gap improvement was not considerable compared to the affine case. Pursuing this method by polynomials with degree greater than or equal to three is not practical since checking positivity of such a polynomial on a given polytope is not easy \cite{Las}. However, the procedure can be followed by restricting to  some classes of  polynomials \cite{Jiang}.\\
\indent
As mentioned earlier, problem \eqref{L} can be written as 
$$ \max_{q\in \mathcal{N}(X)}\{\min x^TQx+2c^Tx: q(x)\leq 0\},$$
 and  the bound \eqref{L} is exact provided  $x^TQx+2c^Tx-q^\star\in P[x]-\mathcal{N}(X)$ ($q^\star$ denotes the optimal value of \eqref{P}). As a result, enlargement of $\mathcal{N}(X)$ may lead to the reduction of  gap.\\
\indent
Let $\bar x\in int(X)$ and let $d\in \mathbf{R}^n$ be an arbitrary non-zero vector. The polytope $X$ can be partitioned in two polytopes $X_1=\{ d^T(x-\bar x)\leq 0, x\in X\}$ and $X_2=\{ d^T(x-\bar x)\geq 0, x\in X\}$. It is readily seen that
\begin{align*}
\mathcal{N}(X)\subseteq N(X_1)\cap N(X_2).
\end{align*}
It is likely that cone $\mathcal{N}(X)$  is strictly included in $\mathcal{N}(X_1)\cap \mathcal{N}(X_2)$  when the bound is not exact. To check $q\in \mathcal{N}(X_1)\cap \mathcal{N}(X_2)$, one needs to solve the linear system 
\begin{equation*}
\begin{array}{ll}
 \   & q(x)=\sum_{i=1}^{\hat m} \hat \alpha_i(x)(\hat A_ix-\hat b_i)\\
   \  & q(x)=\sum_{i=1}^{\bar m} \bar \alpha_i(x)(\bar A_ix-\bar b_i)\\
\  &  \hat \alpha_i\in \mathcal{A}_+(X_1), \ i=1, ..., \hat m \\
\  & \bar \alpha_i\in \mathcal{A}_+(X_2), \ i=1, ..., \bar m 
\end{array}
\end{equation*}
where $X_1=\{x\in \mathbf{R}^n: \hat A_ix-\hat b_i, \ 1\leq i \leq \hat m\}$ and $X_2=\{x\in \mathbf{R}^n: \bar A_ix-\bar b_i, \ 1\leq i \leq \bar m\}$. 
Thus, by the replacement of
$\mathcal{N}(X)$ with $\mathcal{N}(X_1)\cap \mathcal{N}(X_2)$ in \eqref{L}, the number of variables and constraints will be two times more than the former case. It is easily seen the optimal value of \eqref{L} depends on the choice of $\bar x$ and $d$. Note that partitioning the feasible set is a wide-spread method for reducing the duality gap; See \cite{Jam, Spon} and references therein.  We will take advantage of this idea to develop a branch and cut algorithm for concave QPs.\\
\indent
 In the same line, one can partition $X$ to $k$ polytopes and fattens $\mathcal{N}(X)$. In this case, the number of variables  will be of $O(km^2)$. For instance, for $\bar x\in int(X)$, one could consider two different hyperplanes which pass through the given point. As a result, the polytope is divided into four polytopes.\\
\indent
Although problem \eqref{L} provides a lower bound for (QP), the number of variables is of $O(m^2)$, which makes this semi-definite program time-consuming in some cases. In the sequel, we propose the bounds whose decision variables  are less than that of problem \eqref{L}. However, this problem does not equip us with a better lower bound.\\
\indent
Consider problem \eqref{P}. Let $L$ denote  the subspace generated by the eigenvectors of $Q$ corresponding to the negative eigenvalues.  We propose the following bound for  \eqref{P},
\begin{equation}\label{L1}
\begin{array}{ll}
 \ & \max \ \ell \\
 \  &  s.t.\   x^TQx+2c^Tx-\ell+\sum_{i=1}^{m} \alpha_i(x)(A_ix-b_i)\in P[x],\\
\  & \ \ \alpha_i\in \mathcal{A}_+(X), \ i=1, ..., m,\\
\  & \ \ \nabla\alpha_i\in L, \ i=1, ..., m.
\end{array}
\end{equation}
\indent
The above problem is reduced to problem \eqref{L} if $Q$ is negative definite. Nevertheless, for the class of problems which $Q$ has only one negative eigenvalue, problem \eqref{L1} has $2m$ variables. As a result, it would be more beneficial from time aspect to tackle this problem instead of \eqref{L}. In the following proposition, we prove that the optimal value of problem \eqref{L1} is finite.
\begin{proposition}
Let $X$ be a polytope. Then problem \eqref{L1} has finite optimal value.
\begin{proof}
By Proposition \ref{w},  the optimal value of  \eqref{L1} is either finite or minus infinity. So, it suffices to prove the existence of a feasible point. Let $\{v_1, ..., v_k\}$ be a basis for $L$. As $X$ is bounded, by virtue of  Farkas' Lemma, there exist nonnegative constants $Y_{ij}$, $j=1, ..., k$, such that
$v_j=\sum_{i=1}^{m}Y_{ij} A_i^T$. Moreover, there are $f_j$, $j=1, ..., k$, with $v_j^Tx+f_j\in \mathcal{A}_+(X)$. For $\gamma$ sufficiently large, the matrix $Q+\gamma\sum_{j=1}^{k}v_jv_j^T$ is positive semi-definite. As $X$ is bounded, for suitable choice of $\ell$ the affine functions
$\alpha_i(x)=\gamma\sum_{j=1}^{k}Y_{ij}( v_j^Tx+f_j)$, $i=1, ...,m$, fulfill all constraints of \eqref{L1}.
\end{proof}
\end{proposition}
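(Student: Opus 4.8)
The plan is to combine the one-sided bound already proved with an explicit feasibility construction. By Proposition \ref{w} the value of \eqref{L1} never exceeds the optimal value of \eqref{P}, which is finite because $X$ is bounded; hence the optimal value of \eqref{L1} is either finite or $-\infty$, and the whole statement reduces to exhibiting a single feasible triple $(\alpha_1,\dots,\alpha_m,\ell)$. Everything therefore hinges on building nonnegative affine functions whose gradients are confined to $L$ and which turn the shifted objective into a nonnegative polynomial for a suitable $\ell$.

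First I would record two consequences of the boundedness of $X$. Since $\{d:Ad\le 0\}=\{0\}$, the rows $A_1^T,\dots,A_m^T$ positively span $\mathbf{R}^n$: on one hand this gives, for a fixed basis $v_1,\dots,v_k$ of $L$, nonnegative scalars $Y_{ij}$ with $v_j=\sum_{i=1}^m Y_{ij}A_i^T$ (Farkas' lemma); on the other hand it forces $\operatorname{rank}A=n$, so $A^T$ is surjective, and it yields a strictly positive vector $\lambda$ in $\ker A^T$. I would then set the linear parts $\nabla\alpha_i=\gamma\sum_{j=1}^k Y_{ij}v_j\in L$, which automatically meets the gradient constraint, and compute the quadratic part of $\sum_{i=1}^m\alpha_i(x)(A_ix-b_i)$: using $\sum_i Y_{ij}A_i=v_j^T$ it collapses to $\gamma\, x^T\big(\sum_{j=1}^k v_jv_j^T\big)x$. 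Because $Q$ is negative definite on $L$ and positive semidefinite on $L^\perp$, while $\sum_j v_jv_j^T$ is positive definite on $L$ and vanishes on $L^\perp$, the Hessian $Q+\gamma\sum_j v_jv_j^T$ is positive semidefinite once $\gamma$ is large enough.

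The step I expect to be the real obstacle is arranging that the resulting quadratic $q(x)=x^TQx+2c^Tx+\sum_i\alpha_i(x)(A_ix-b_i)$ is actually bounded below, which is exactly what is needed for some $\ell$ to satisfy $q-\ell\in P[x]$. The Hessian above is only positive semidefinite, and its kernel is precisely $\ker Q$; so $q$ is bounded below iff its linear part is orthogonal to $\ker Q$. This is where the constant terms of the $\alpha_i$ must be chosen carefully rather than tied to the $Y_{ij}$: I would pick constants $g_i$ solving $\sum_i g_i A_i^T=-2c$ (solvable since $A^T$ is onto), which pushes the linear part of $q$ into $L\subseteq\operatorname{range}Q=(\ker Q)^\perp$, and then add a large positive multiple of $\lambda$ to the vector $(g_i)$ so that each $\alpha_i$ becomes nonnegative on $X$; adding a multiple of $\lambda\in\ker A^T$ changes neither the quadratic part nor the orthogonality to $\ker Q$. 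With $q$ now convex and bounded below, any $\ell\le\inf_x q(x)$ completes a feasible point. I note that when $Q$ is nonsingular---the typical case the proposition targets, e.g. one negative and otherwise positive eigenvalues---the kernel is trivial and this last adjustment is unnecessary, so the direct choice $g_i=\gamma\sum_j Y_{ij}f_j$ with $v_j^Tx+f_j\in\mathcal{A}_+(X)$ already works.
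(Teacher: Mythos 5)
Your construction coincides with the paper's at its core: both use the boundedness of $X$ (via Farkas' lemma) to write each basis vector $v_j$ of $L$ as a nonnegative combination $\sum_{i}Y_{ij}A_i^T$, take $\nabla\alpha_i=\gamma\sum_j Y_{ij}v_j\in L$, observe that the quadratic part of $\sum_i\alpha_i(x)(A_ix-b_i)$ collapses to $\gamma\,x^T\big(\sum_j v_jv_j^T\big)x$, and choose $\gamma$ large so the Hessian $Q+\gamma\sum_j v_jv_j^T$ becomes positive semidefinite. Where you diverge is in the constant terms, and your divergence matters. The paper sets $\alpha_i(x)=\gamma\sum_j Y_{ij}(v_j^Tx+f_j)$ and asserts a suitable $\ell$ exists; but with that choice the linear coefficient of the resulting quadratic equals $2c$ plus a vector in $L$, while the range of the (merely semidefinite) Hessian is $(\ker Q)^{\perp}\supseteq L$. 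Hence if $Q$ is singular and $c\not\perp\ker Q$, the quadratic is unbounded below along $\ker Q$ and no $\ell$ makes it a nonnegative polynomial --- e.g.\ $Q=\mathrm{diag}(-1,0)$, $c=(0,1)^T$, $X=[0,1]^2$ defeats the paper's formula, although \eqref{L1} is still feasible there with other constants. Your extra step --- solving $\sum_i g_iA_i^T=-2c$ for the constant terms (possible since boundedness forces $\mathrm{rank}\,A=n$) and then adding a large multiple of a strictly positive $\lambda\in\ker A^T$ to restore membership in $\mathcal{A}_+(X)$ without changing the quadratic or linear parts --- is exactly what is needed to place the linear coefficient in the range of the Hessian. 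So your proof is correct; it follows the paper's route but repairs a genuine gap in the degenerate case, and, as you note, reduces to the paper's choice when $Q$ is nonsingular.
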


\indent
The following example demonstrates that the optimal value of  \eqref{L1} may be strictly less than that of  \eqref{L}.
\begin{example}
Consider the QP,
\begin{equation*}
\begin{array}{ll}
 \ & \min \ 2x_1x_2 \\
 \  &  s.t.\   x_1, x_2\leq 1\\
\  & \  \ \ -x_1, -x_2\leq 0.
\end{array}
\end{equation*}
The optimal values of this QP and  \eqref{L} is zero (Take into account $\alpha_1(x)=0, \alpha_2(x)=0, \alpha_3(x)=x_2, \alpha_4(x)=x_1, \ell=0$).  It is seen that $\alpha\in \mathcal{A}_+(X)$ and $\nabla\alpha\in L$ if and only if $\alpha\in cone(\{x_1-x_2+1, -x_1+x_2+1\})$. By solving problem \eqref{L1}, for any feasible point, $\alpha_1, \alpha_2, \alpha_3, \alpha_4, \ell$, we have $\ell\leq \frac{-1}{8}$.
\end{example}

\indent
One can  also formulate a semi-definite program for  box constrained QPs  with fewer variables in comparison with \eqref{L}. In this case, one could consider the affine coefficient of constraint $x_k\leq u_k$ ($-x_k\leq -l_k$), $\alpha_k$, in the form
$$
\alpha_k(x)=d_kx_k+f_i, \   \alpha_k\in \mathcal{A}_+(X),
$$
where $d_k\in \mathbf{R}$. Similar to Proposition \ref{well}, it is proved that the bound  is  finite in this case as well.\\
%%%%%%%%%%%%%%%%%%%%%%%%%%%%%%%%%%%%%%
\section{Comparison with existing bounds}
One important question here is the relationship between  the  bound  \eqref{L} and the conventional lower bounds for QPs. For comparison, we focus on two types of QPs,  standard quadratic programs and box constrained quadratic programs. Let us first concentrate on standard quadratic programs.  Consider the standard quadratic program,
\begin{equation}\tag{StQP}\label{SQ}
\begin{array}{ll} 
 \ & \min \ x^TQx
\\ &  s.t. \ \sum _{i=1}^n x_i= 1,\\
&  \ \  x\geq 0.
\end{array}
\end{equation}

\indent
It is well-known that \eqref{SQ} is solvable in polynomial time if $Q$ is either positive semi-definite or negative semi-definite on standard simplex (denoted by $\Delta$ hereafter). In general, \eqref{SQ} is NP-hard \cite{Bomze1}. \\
\indent
 We denote the optimal value of \eqref{SQ} by $\ell_Q$.  
Note that optimizing a quadratic function on standard simplex can be casted as \eqref{SQ}. This follows form the fact that for each $x\in \Delta$, we have 
$x^TQx+2c^Tx=x^T(Q+ec^T+ce^T)x$.
It is seen that  $\ell_{Q+tee^T}=\ell_Q+t$ for each $t\in\mathbf{R}$. So, in the current section  we assume that $Q$ is nonnegative in \eqref{SQ}.\\
%%%%%%%%%%%%%%%%%%%%%%%%%%%%%%%
\indent
Bomze et al. \cite{Bomze1} have proposed the best (quadratic) convex underestimation bound as follows 
\begin{equation}\label{conv}
\ell_Q^{conv} =\sup\{\ell_S :S\succeq 0, Q-S\geq 0, diag(S)=diag(Q)\},
\end{equation} 
where $diag(S)$ denotes the diagonal of $S$.  They show that the above problem gives better bound in comparison with other quadratic bounds.  Problem \eqref{L} is formulated for \eqref{SQ} as follows,
\begin{equation}\label{DD}
\begin{array}{ll} 
 \ & \max \ \ell \\
 \  &  s.t.\   x^TQx-\ell+\sum_{i=1}^{n} \alpha_i(x)(-x_i)+\alpha_{n+1}(x)(e^Tx-1)\in P[x],\\
\  & \ \  \ \alpha_i\in \mathcal{A}_+(\Delta),\ i=1, ..., n.
\end{array}
\end{equation}
\indent
 Next theorem shows that bounds \eqref{conv} and \eqref{DD} are equivalent. 
\begin{theorem} 
Problems \eqref{conv} and \eqref{DD} give the same bound. 
\begin{proof} 
Let $\bar\ell$ denote the optimal value problem \eqref{DD}. First we show that $\ell_Q^{conv}\leq \bar \ell$. Without loss of generality, we may assume that  \eqref{conv} admits an  optimal solution $S$. As $(Q-S)_i\geq 0$, 
$\bar \alpha_i(x)=(Q-S)_ix\in \mathcal{A}_+(\Delta)$, $i=1, ..., n$, and 
$$
x^TQx-x^TSx+\sum_{i=1}^{n} \bar\alpha_i(x)(-x_i)=0.
$$
Invoking the optimality conditions for convex QPs, there are nonnegative scalars $\beta_i$, $i=1, ..., n$ and $\beta_{n+1}$ such that
$$
 x^TSx-\ell_Q^{conv}+\sum_{i=1}^{n} \beta_i(-x_i)+\beta_{n+1}(e^Tx-1)\in P[x]. 
$$
By above equalities, it is seen that $\alpha_i(x)=\bar \alpha_i(x)+\beta_i$, $i=1, ..., n+1$, and $\ell=\ell_Q^{conv}$ are feasible for problem \eqref{DD}. So $\ell_Q^{conv}\leq \bar \ell$.\\
 Now, let $\bar\ell$ and $\bar \alpha_i=a_i^Tx+a^0_i$, $i=1, ..., n+1$, be optimal  for \eqref{DD}. We get
\begin{equation}\label{lkj}
x^TQx-\bar \ell -\sum_{i=1}^n x_i \alpha_i(x)+(e^Tx-1)\alpha_{n+1}(x)=(x-\bar x)^TS(x-\bar x)+s,
\end{equation}
where $S$ and $s$ are a positive semi-definite matrix and a nonnegative scalar, respectively, and $\bar x\in\mathbf{R}^n$. Let $e^Tx\neq 0$. By replacing $x$ with $(e^Tx)^{-1}x$  and multiplying  both sides of \eqref{lkj} by 
$(e^Tx)^2$, we get 
$$
x^T(Q-\bar\ell ee^T)x-\sum_{i=1}^n x^T e_i(a_i+a^0_i e)x=(x-(e^Tx)\bar x)^TS(x-(e^Tx)\bar x)+s((e^Tx))^2.
$$
Since $\alpha_i\in\mathcal{A}_+(\Delta)$, $a_i^j+a^0_i\geq 0$ for $i=1, ..., n$ and  $j=1, ..., n$. So, matrix $N=\sum_{i=1}^n  e_i(a_i+a^0_i e)\geq 0$. By continuity,  the homogenous quadratic function $(x-(e^Tx)\bar x)^TS(x-(e^Tx)\bar x)+s((e^Tx))^2$ is nonnegative on $\mathbf{R}^n$. Thus,  $(x-(e^Tx)\bar x)^TS(x-(e^Tx)\bar x)+s((e^Tx))^2=x^T\bar S x$ for some $\bar S \succeq 0$. Hence,
 $$
 \{\max \ell: Q-\ell ee^T \succeq N, \ N\geq 0\}\geq \bar\ell.
 $$
  Since  $\ell_Q^{conv}=\{\max \ell: Q-\ell ee^T \succeq N, \ N\geq 0\}$ (see Section 6 in \cite{Bomze1}), $\bar \ell \leq \ell_Q^{conv}$ and the proof is complete.
\end{proof} 
\end{theorem}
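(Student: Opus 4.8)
The plan is to prove the claimed equality $\ell_Q^{conv} = \bar\ell$, where $\bar\ell$ denotes the optimal value of \eqref{DD}, by establishing the two inequalities separately. Throughout I would lean on the reformulation $\ell_Q^{conv} = \max\{\ell : Q - \ell ee^T \succeq N,\ N \geq 0\}$ from Section 6 of \cite{Bomze1}, since this matrix-level description is precisely the bridge between \eqref{conv} and the affine-multiplier formulation \eqref{DD}.

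For the inequality $\ell_Q^{conv} \leq \bar\ell$, I would start from an optimal $S$ of \eqref{conv}, so that $S \succeq 0$ and $Q - S \geq 0$ entrywise. The entrywise nonnegativity lets me read off affine functions $\bar\alpha_i(x) = (Q-S)_i x$ (the $i$-th row of $Q-S$ applied to $x$) that are nonnegative on the nonnegative orthant, hence lie in $\mathcal{A}_+(\Delta)$. A direct computation then gives the identity $x^TQx + \sum_{i=1}^n \bar\alpha_i(x)(-x_i) = x^TSx$ on all of $\mathbf{R}^n$. Because $S \succeq 0$, the convex quadratic $x^TSx$ over $\Delta$ satisfies strong duality, so its optimal value $\ell_S = \ell_Q^{conv}$ admits a certificate $x^TSx - \ell_Q^{conv} + \sum_i \beta_i(-x_i) + \beta_{n+1}(e^Tx-1) \in P[x]$ with $\beta_i \geq 0$ and $\beta_{n+1}$ unrestricted. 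Adding the two relations and absorbing the $\beta_i$ into $\alpha_i = \bar\alpha_i + \beta_i$ (still in $\mathcal{A}_+(\Delta)$) produces a feasible point of \eqref{DD} of value $\ell_Q^{conv}$.

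The reverse inequality $\bar\ell \leq \ell_Q^{conv}$ is where the real work lies. Starting from an optimal $(\bar\ell, \bar\alpha_i)$ of \eqref{DD} with $\bar\alpha_i(x) = a_i^Tx + a_i^0$, the first constraint states that a certain quadratic equals a nonnegative quadratic, which I can write as $(x-\bar x)^TS(x-\bar x) + s$ with $S \succeq 0$ and $s \geq 0$. The key manoeuvre is homogenization: replace $x$ by $(e^Tx)^{-1}x$ and clear denominators by multiplying through by $(e^Tx)^2$. The point of this step is that the equality-constraint multiplier term $(e^Tx - 1)\bar\alpha_{n+1}(x)$ vanishes identically, since $e^T((e^Tx)^{-1}x) = 1$, eliminating the single unrestricted multiplier. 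What survives is a homogeneous identity $x^T(Q - \bar\ell ee^T - N)x = (x - (e^Tx)\bar x)^TS(x-(e^Tx)\bar x) + s(e^Tx)^2$, valid for all $x$ by continuity, where $N_{ij} = (a_i)_j + a_i^0 = \bar\alpha_i(e_j)$. The right-hand side is visibly a nonnegative quadratic form $x^T\bar S x$ with $\bar S \succeq 0$, and evaluating the $\bar\alpha_i$ at the simplex vertices $e_j$ shows $N \geq 0$. Hence $Q - \bar\ell ee^T \succeq N$ with $N \geq 0$, so $\bar\ell$ is feasible for $\max\{\ell: Q - \ell ee^T \succeq N,\ N \geq 0\} = \ell_Q^{conv}$, giving $\bar\ell \leq \ell_Q^{conv}$.

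I expect the homogenization step to be the main obstacle, both in tracking how each term scales under $x \mapsto (e^Tx)^{-1}x$ followed by multiplication by $(e^Tx)^2$, and in justifying the passage from ``holds whenever $e^Tx \neq 0$'' to ``holds for all $x$'' by continuity of quadratics. The conceptual crux is recognizing that nonnegativity of the affine multipliers on $\Delta$ is equivalent to their nonnegativity at the vertices $e_j$, which is exactly the entrywise condition $N \geq 0$ that matches the constraint structure underlying \eqref{conv}.
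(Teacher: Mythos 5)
Your proposal is correct and follows essentially the same route as the paper's proof: the forward inequality via the multipliers $\bar\alpha_i(x)=(Q-S)_i x$ plus the KKT certificate for the convex QP $\min_{x\in\Delta}x^TSx$, and the reverse inequality via the homogenization $x\mapsto (e^Tx)^{-1}x$ followed by multiplication by $(e^Tx)^2$, landing on the reformulation $\ell_Q^{conv}=\max\{\ell: Q-\ell ee^T\succeq N,\ N\geq 0\}$. The only (harmless) refinements you add are making explicit that $\beta_{n+1}$ is unrestricted and that $N_{ij}=\bar\alpha_i(e_j)\geq 0$ comes from evaluating the multipliers at the simplex vertices.
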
 

\indent
In the rest of the section, we continue our discussion for box constrained QPs. Consider the box constrained QP 
\begin{equation}\label{CP}
\begin{array}{ll} 
 \ & \min \  x^TQ_0x+2c_0^Tx \\ 
  & \; \; \; \; \; \; \; \; \; a_i^Tx=d_i, \ \ \;\;\;\;\;\;\;\;\;\;\;\;\; i=1, ..., m \\
  & \; \; \; \; \; \; \; \; \;   l\leq x\leq u,
\end{array}
\end{equation}
where $n>m$. For convenience, we may assume that $l=0$ and $u=e$. By the combination of semidefinite programming relaxation (SDP) and RLT, some scholars have proposed new relaxations for problem \eqref{CP} \cite{Ans, Bao}. One of the most effective relaxations in this category is Shor relaxation with partial first-level RLT  (SRLT). This bound is formulated as 
 \begin{align}\label{D2}
\nonumber \ & \min \ Q_0\bullet X+2c_0^Tx \\
\nonumber  \   &  s.t.\    Xa_i^T= d_ix, \ \ \;\;\;\;\;\;\;\;\;\;\;\;\;i=1, ..., m \\
\nonumber  \  & \; \; \; \; \; \; \; \; \; a_i^Tx=d_i, \ \ \;\;\;\;\;\;\;\;\;\;\;\;\; i=1, ..., m \\
& \; \; \; \; \; \; \; \; \; \; \; \;    X\geq 0, \ \\
\nonumber  & \; \; \; \; \; \; \; \; \; \; \; \;    ex^T-X\geq 0,\\
\nonumber  & \; \; \; \; \; \; \; \; \; \; \; \;    X-ex^T-xe^T+ee^T\geq 0,\\
\nonumber & \; \; \; \; \; \; \; \; \; \; \; \;   X-xx^T\succeq 0.
\end{align}
Bao et al.  \cite{Bao} have provided a full comparison of relaxation methods for box constrained QPs. They show that  SRLT dominates the other relaxations. By the discussion made in the former section on the dual of \eqref{L}, it is straightforward to see that SRLT is the dual of \eqref{L} for box constrained QPs, see \eqref{DD0}. As strong duality holds for problem \eqref{L},  SRLT snd \eqref{L} are equivalent.\\
\indent
One important issue with quadratic programs is how to convert a relaxation solution to an approximate  solution \cite{Luo}. As problem \eqref{L} not only provides a lower bound for quadratic programs, but also gives a convex underestimator, one may obtain an approximate solution by optimizing the given function on the feasible set. We use this strategy in the next section.\\
\indent
We conclude the section by addressing an interesting point about bound \eqref{L}. This bound can be regarded as a special case of the following bound
 \begin{align}\label{D20}
\nonumber \ & \max \  \ell \\
 \   &  s.t.\    x^TQx+2c^Tx-\ell -\sum_{\tau\in \mathbb{N}^d_d}\lambda_\tau\prod_{i=1}^d (b_i-A_ix)^{\tau_i}=\sigma(x),\\
\nonumber  \  & \; \; \; \; \; \; \; \; \; \sigma\in\Sigma[x], \ \lambda\geq 0,
\end{align}
where $\mathbb{N}_d^d=\{\tau \in \mathbb{N}^d: \sum_{i=1}^d \tau_i\leq d \}$. One may regard the above-mentioned bound as a combination of  Lasserre hierarchy and Handelman's approximation hierarchy. One can obtain bound \eqref{L} by setting $d=2$ in \eqref{D20}. In fact, this follows from Non-homogenous  Farkas' Lemma. Recently, some scholars have taken advantage of this idea and proposed new bounds for general polynomial optimization problems; See \cite{Lass} for more details.  
%%%%%%%%%%%%%%%%%%%%%%%%%%%%%%%%%%%%%%
\section{A new algorithm for concave quadratic optimization}
In this section, by virtue of the newly introduced bound, we propose a new algorithm for concave QPs. Throughout the section, it is assumed that $X$ is a polytope with nonempty interior and $Q$ is negative semi-definite. We introduce a branch and cut (B\&C)  algorithm. We use Konno's cut in the cutting step. Before we go into the details of the algorithm, let us remind a definition.
\begin{definition}
Let $\hat x\in X$ be a vertex. This vertex is called a local optimal if the value of the objective function at this point is less than or equal to that at  adjacent vertices.
\end{definition}

\indent
 As mentioned before, we are developing a B\&C algorithm for concave QPs. The main steps of a B\&C method are branching, bounding, fathoming and cutting. A typical branching approach for QPs is partitioning and for bounding is linear program relaxation based on RLT; See \cite{Horst, Sherali2} for more details. Recently, some scholars have employed KKT optimality condition and semi-definite relaxation for branching and bounding, respectively \cite{Bur2, Chen}.  In the sequel, we present the details of the steps.\\
 \indent
The proposed method regards \eqref{P} as the root node of the B\&C tree. Let the following quadratic program be the subproblem of a node,
  \begin{equation*}
\begin{array}{ll}
 \ & \min \ x^TQx+2c^Tx
\\ &  s.t. \ \bar Ax\leq \bar b,
\end{array}
 \end{equation*}
 and let $\bar{X}$ denote the feasible set of the above problem.
 To get a lower bound for the node, we formulate  the semi-definite program
 \begin{align} \label{LLu}
\nonumber \max \  \ & \ell \\
 \  \nonumber  s.t.\  \ &
 \begin{pmatrix}
-\bar A^T Y \bar A &  \  \bar A^T(Y \bar b+0.5y)\\
(Y \bar b+0.5y)^T\bar A &   \ -y^T\bar b-\bar b ^TY \bar b
\end{pmatrix}
+\begin{pmatrix}
Q & c\\
c^T &  -\ell
\end{pmatrix}
\succeq 0,\\
 & \ \ell\leq u\\
\nonumber \ & Y \geq 0, \ Y=Y^T, \ y\geq 0,
\end{align}
 where $u$ is the best upper bound obtained by the algorithm so far. Let $\bar{Y}$ and  $\bar{y}$ be optimal solutions to \eqref{LLu}. Then, we formulate the convex QP,
\begin{equation}\label{Qc}
\begin{array}{ll}
 \ & \min \ x^T(Q-\bar A^T \bar Y \bar A)x+2(c+\bar A^T\bar Y \bar b+0.5A^T\bar y)^Tx
\\ &  s.t. \ Ax\leq b.
\end{array}
\end{equation}

\indent
Let $\hat x$ be a solution of problem \eqref{Qc}. Next, a local vertex optimal point $\bar{x}\in X$ is chosen such that $\bar{x}^TQ\bar{x}+2c^T\bar{x}\leq \hat{x}^TQ\hat{x}+2c^T\hat{x}$. This step is not time consuming. Indeed, there are some efficient approaches for computing $\bar{x}$ \cite{Tuy}. We use the value $\bar{x}^TQ\bar{x}+2c^T\bar{x}$ to update the upper bound, $u$. We use $\bar{l}$ and other lower bounds obtained from fathomed nodes and child nodes to update the lower bound.\\
\indent
After the bounding step, if the difference of $\ell$ and $u$ is less than the prescribed tolerance, $\epsilon$, the node will be fathomed. Otherwise, the algorithm solves problem \eqref{Qc} corresponding to the feasible set of the subproblem, $\bar X$, and computes a local optimal point $\bar x \in \bar X$ for concave QP $\{\min x^TQx+2c^Tx: x\in \bar X\}$.\\
\indent
In the next step, the algorithm produces a cut. Let us go into the details of the step. As mentioned above, we employ Konno's cut. For convenience, let $\bar x=0$ be a non-degenerated  local optimal vertex of $\{\min x^TQx+2c^Tx: x\in \bar X\}$. Suppose that  vectors $e_i$,  $i=1, ..., n$, denote extreme directions at $\bar x\in \bar X$.  Let $q(x)=x^TQx+2c^Tx$. To compute Konno's cut, first we need to obtain Tuy's cut given by
$$
\sum_{i=1}^n \frac{x_i}{t_i}\geq 1,
$$
 where $t_i:=\max\{\theta: q(\bar x+\theta e_i)\geq u-\epsilon\}$. Then, the algorithm computes $y^i\in \argmin\{\bar x^TQy+2c^Ty: x\in X, \sum_{i=1}^n \frac{x_i}{t_i}\geq 1\}$. If  the following inequality holds it will continue the cutting step, for otherwise it goes to the branching step.
 $$
 \min_{i=1, ..., n} \{q(y^i)\}\geq u-\epsilon.
 $$
We call $\bar x$ an eligible vertex if we have the above inequality. Let $\bar x$ be eligible. The method will add the valid cut
 $
\sum_{i=1}^n \frac{x_i}{s_i}\geq 1,
$
where
$$
s_i=\max\{\theta: -A^T+t^{-1}\mu-Q_i^T\theta=c, -b^T\lambda+\mu+c_i\theta\geq u-\epsilon, \lambda\geq 0, \mu\geq 0\},
$$
for $i=1, ..., n$ and $t^{-1}=\begin{pmatrix}
 \frac{1}{t_1}, &
\hdots &,
 \frac{1}{t_n}
\end{pmatrix}^T$. We refer the interested reader to \cite{Konno, Tuy} for more information on Konno's cut.\\
\indent
After computing Konno's cut, the method updates the feasible set, that is, set $\bar X=\{x\in \bar X: \sum_{i=1}^n \frac{x_i}{s_i}\geq 1\}$. If the feasible set is empty, the node will be fathomed. Otherwise, the method goes to the branching step.\\
\indent
  For branching, the algorithm divides the polytope $\bar X$ into two partitions $X_1$ and $X_2$ such that $X_1=\{x: x\in \bar X, d^Tx\leq d^T x_c\}$ and $ X_2=\{x: x\in \bar X, d^Tx\geq d^T x_c\}$ where $x_c$ is a Chebyshev center of $\bar X$ and $d\neq 0$ is a random vector in $\mathbf{R}^n$. It is worth noting that  a Chebyshev center of a polytope is computed by solving a linear program\cite{Boyd}. \\
Updating the upper bound is straightforward (it is enough to consider the minimum of the provided upper bounds).
To update the lower bound, we must consider the lower bound of all fathomed and child nodes. Strictly speaking, it is seen that for father node $n_p$ and its two child nodes, $n_{p_1}$ and $n_{p_2}$, we have  $l_p\leq \min\{l_{p_1}, l_{p_2}\}$, where  $l_p, l_{p_1}$ and $l_{p_2}$ denote the generated lower bound of $n_p, n_{p_1}$ and $n_{p_2}$, respectively. Therefore, if $\{ l^f_i\}_{i=1}^k$ and $\{ l^c_i\}_{i=1}^o$ are the lower bound fathomed and new nodes, respectively,  then the new lower bound will be obtained by the following formula
\begin{equation}\label{upd}
l=\min\{\min_{i=1}^k  l^f_i, \min_{i=1}^o  l^c_i \}.
\end{equation}
It can be seen that the lower bound is increasing throughout the algorithm. All steps of the method are presented in Algorithm 1.
\begin{algorithm}[]\label{Alg}
  \caption{Branch and Cut Algorithm}
  \begin{algorithmic}[1]
    \Initialize{$k=1, l=-\infty, u=\infty, s=1, \epsilon >0, L=\{\}, \Pi_1=\{(P_1, X)\}$ and $\Pi_2=\{\}$. }
    \While{s = 1}
        \While{$\Pi_1\neq\emptyset$}
      \State $(P_k, X_k) \gets select(\Pi_1)$ and $\Pi_1\setminus (P_k, X_k)$.
     \State Solve semi-definite program \eqref{L} corresponding to $(P_k, X_k)$ to get $l_k$.
        \State $L \gets l_k$ and delete its father lower bound if exists in $L$.
        \State Solve convex QP \eqref{Qc} and obtain a local vertex optimal $\bar x$.
        \State $u\gets \min(u, \bar x^TQ \bar x+2c^T\bar x$).
        \If{$l_k+\epsilon< u,$} 
                 \State \begin{varwidth}[t]{\linewidth} 
                    Solve convex QP \eqref{Qc} corresponding to $X_k$ and obtain a local vertex optimal \hspace{5cm}
                    $\bar x\in\{\min x^TQx+2c^Tx: x\in \bar X\}$.
                    \end{varwidth}
                 \If{$\bar{x}$ is eligible,} 
                    \State Compute Konno's cut $\sum_{i=1}^n \frac{x_i}{t_i}\geq 1$ at $\bar x$.
                    \State  $ X_k\gets\{ X_k: \sum_{i=1}^n \frac{x_i}{t_i}\geq 1\}$
                 \EndIf
                 \If{$X_k\neq \emptyset$,}
                    \State Branch  $(P_k, X_k)$ to two nodes  $(P_{k+1}, X_{k+1})$ and $(P_{k+2}, X_{k+2})$.
                     \State $\Pi_2\gets\{(P_{k+1}, X_{k+1}), (P_{k+2}, X_{k+2})\}$ and $k\gets k+2$.
                 \EndIf
            \EndIf
       \EndWhile
        \State update $l$ by formula \eqref{upd}.
         \If{$\Pi_2=\emptyset$ or stopping criteria are satisfied,}
        \State $s=0$.
        \Else
        \State $\Pi_1\gets \Pi_2$ and $\Pi_2\gets \{\}$.
      \EndIf
    \EndWhile
  \end{algorithmic}
\end{algorithm}

\indent
Here, $\Pi_1$ and  $\Pi_2$ denote the set of nodes and the set $L$ contains lower bounds of fathomed and child nodes. Additionally,  $P_k$ and  $X_k$ denote the $k^{th}$ node and its feasible set, respectively. Stoping criteria which one may use in Algorithm 1 can be  absolute gap tolerance, a limit on the maximum running time, etc.\\
\indent
In the rest of the section, we investigate the finite convergent of the algorithm with the stopping accuracy
$\epsilon> 0$. To this end, we consider function $\Phi: \mathbf{R}^n\times \mathbf{R}_{+}^n\to \mathbf{R}$ where $\Phi(y, d)$ is defined as the optimal value 
\begin{align}\label{ps}
\nonumber & \max \  \ell \\
\nonumber \  &  s.t.\   x^TQx+2c^Tx -\ell+\sum_{i=1}^{n} [\alpha_i(x)(x-y_i-d_i)+\mu_i(x)(-x+y_i-d_i)]\in P[x]\\
 \ &  \ \ \  \  \  \alpha_i, \mu_i \in \mathcal{A}_+(X(y,d)),
\end{align}
where $X(y,d)=\{x: y-d\leq x\leq y+d\}$. The well-definedness of $\Phi$ on its domain follows from Proposition \ref{well}. The next lemma lists some properties  of $\Phi$.
\begin{lemma} \label{lcon}
The function $\Phi: \mathbf{R}^n\times \mathbf{R}_+^n\to \mathbf{R}$ has the following properties.
\begin{itemize}
\item[(i)]  $\Phi(y, 0)=y^TQy+2c^Ty ,  \ \ \forall y\in \mathbf{R}^n$;
\item[(ii)]  $\Phi$ is continuous on $X\times  \mathbf{R}_{+}^n$.
\end{itemize}
\begin{proof}
The first part  follows immediately from Proposition \ref{sing}. \\
For the second part, first we prove the lower semi-continuity of $\Phi$.  Let $(\bar y, \bar d)\in   \mathbf{R}_{+}^n$ and $\bar \alpha_i, \bar\mu_i \in \mathcal{A}_+(X(\bar y, \bar d))$ and $\bar\ell$ are optimal for \eqref{ps}. For every $\epsilon>0$, there are $\hat \alpha_i, \hat\mu_i \in \mathcal{A}_+(X(\bar y, \bar d))$ such that the quadratic function
$$
x^T Qx+2 c^Tx -(\bar \ell-\epsilon)+\sum_{i=1}^{n} (\bar\alpha_i(x)+\hat\alpha_i(x))(x-\bar y_i-\bar d_i)+(\bar\mu_i(x)+\hat\mu_i(x))(-x+\bar y_i- \bar d_i),
$$
is strictly convex and positive on $\mathbf{R}^n$. As a result, for small perturbations of $\bar \alpha_i, \bar\mu_i, \bar y$ and $\bar d$, the above-mentioned quadratic function belongs to  $P[x]$, which implies
$$
\liminf_{(y,d)\to (\bar y, \bar d)}\Phi(y,d)\geq \bar \ell-\epsilon.
$$
As the above atatement holds for each $\epsilon>0$, $\Phi$ is lower semi-continuous  at $(\bar y, \bar d)$.
Now, we prove the upper semi-continuity of $\Phi$. First, we consider the case $\bar d \in int(\mathbf{R}_{+}^n)$. Let the sequence
$\{(y_k, d_k)\}\subseteq X\times   \mathbf{R}_{+}^n$ tends to $(\bar y, \bar d)$. Suppose that $\alpha_i^k, \mu_i^k$ and $\ell^k$ are optimal for problem \eqref{ps} corresponding to $(y_k, d_k)$. If for each $i=1, ..., n$ the sequences $\{\alpha_i^k\}, \{\mu_i^k\}\subseteq \mathbf{R}^{n+1}$ are bounded, then without loss of generality we may assume that $\alpha_i^k\to \bar \alpha_i$, $\mu_i^k\to \bar \mu_i$ and $\ell_k\to \bar \ell$.  In addition, due to the lower semi-continuity of the set-valued mapping $ X(., .)$, we have $\bar \alpha_i, \bar \mu_i \in \mathcal{A}_+(X(\bar y,\bar d))$ and
$$
x^T Qx+2 c^Tx -\bar \ell+\sum_{i=1}^{n} [\bar\alpha_i(x)(x-\bar y_i-\bar d_i)+\bar\mu_i(x)(-x+\bar y_i- \bar d_i)]\in P[x],
$$
which implies upper semi-continuity in this case. For the case of the existence of some unbounded sequences, without loss of generality we may assume that  $t_k^{-1}\alpha_i^k\to \bar \alpha_i$ and $t_k^{-1}\mu_i^k\to \bar \mu_i$, where $t_k=\max_{1\leq i\leq m}\{\|\alpha_i^k\|, \|\mu_i^k\|\}$. Moreover, there exists $\bar\alpha_i\neq 0$ and
$$
q(x)=\sum_{i=1}^{n} [\bar\alpha_i(x)(x-\bar y_i-\bar d_i)+\bar\mu_i(x)(-x+\bar y_i- \bar d_i)]\in P[x].
$$
Similar to the proof of Proposition \ref{popt}, because of $int(X(\bar y,\bar d))\neq \emptyset$, there is $\bar x$ such that $q(\bar x)<0$, which contradicts the nonnegativity of $q$. So, the unboundedness case cannot occur, and in this case the upper semi-continuity of $\Phi$ is derived. Likewise, one can prove it for the case that some components of $\bar d$ are zero, $\Phi$ is upper semi-continuous at $(\bar y, \bar d)$ on $X\times  \mathbf{R}_{\bar d}^n$, where $\mathbf{R}_{\bar d}^n=\{x\in \mathbf{R}_{+}^n: x_i=0 \ if \ \bar d_i=0\}$. Suppose that the sequence
$\{(y_k, d_k)\}\subseteq X\times   \mathbf{R}_{+}^n$ tends to $(\bar y, \bar d)$. We decompose the sequence $\{d_k\}$ as $d_k=d^1_k+d_k^2$ where $d_k^1$ is the projection of $d_k$ on $\mathbf{R}_{\bar d}^n$. It is  seen that $d^1_k\to \bar d$. So, we have
$$
\limsup_{k\to \infty}\Phi(y_k,d_k)\leq \limsup_{k\to \infty}\Phi(y_k,d_k^1)\leq \Phi(\bar y,\bar d).
$$
The first inequality results from the inclusion property. Therefore, $\Phi$ is continuous on  $X\times  \mathbf{R}_{+}^n$ and the proof is complete.
\end{proof}
\end{lemma}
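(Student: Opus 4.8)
The plan is to prove the two assertions separately, establishing continuity in part (ii) by the standard splitting into lower and upper semi-continuity. Part (i) is immediate: when $d=0$ the box $X(y,0)=\{x: y\le x\le y\}$ collapses to the singleton $\{y\}$, so problem \eqref{ps} is exactly problem \eqref{L} over a one-point polytope. Proposition \ref{sing} states that the bound is exact on singletons, so $\Phi(y,0)$ equals the objective value $y^TQy+2c^Ty$, as required.

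For lower semi-continuity I would use a strict-feasibility (open-condition) argument. Fix $(\bar y,\bar d)$ together with an optimal triple $(\bar\alpha_i,\bar\mu_i,\bar\ell)$ for \eqref{ps}. Given $\epsilon>0$, I add to the certifying quadratic a strictly convex positive quadratic built from elements of $\mathcal{A}_+(X(\bar y,\bar d))$; such a function exists because $\mathcal{N}(X)$ contains a strictly convex member, exactly as in the proof of Proposition \ref{well}. This pushes the certificate into the interior of $P[x]$ at the level $\bar\ell-\epsilon$. Since the affine multipliers can be perturbed continuously while remaining in $\mathcal{A}_+(X(y,d))$ as the data $(y,d)$ vary, the perturbed quadratic stays nonnegative for all nearby $(y,d)$, giving $\liminf_{(y,d)\to(\bar y,\bar d)}\Phi(y,d)\ge\bar\ell-\epsilon$. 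Letting $\epsilon\to 0$ yields lower semi-continuity.

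The real work is in upper semi-continuity, which I would first prove under the assumption $\bar d\in int(\mathbf{R}_+^n)$. Taking $(y_k,d_k)\to(\bar y,\bar d)$ with optimal multipliers $\alpha_i^k,\mu_i^k$ and values $\ell^k$, I split into the bounded and unbounded cases. If the multiplier sequences stay bounded, I pass to convergent subsequences and use the closedness of $P[x]$, the closedness of $\mathcal{A}_+$, and the lower semi-continuity of the set-valued map $(y,d)\mapsto X(y,d)$ to extract a feasible limit triple of value $\lim\ell^k$, forcing $\limsup_k\ell^k\le\Phi(\bar y,\bar d)$. If some multiplier sequence is unbounded, I normalize by $t_k=\max_i\{\|\alpha_i^k\|,\|\mu_i^k\|\}$ exactly as in Proposition \ref{popt}: the normalized limit is a nonzero combination $q\in P[x]$ with some $\bar\alpha_j\ne 0$, and using $int(X(\bar y,\bar d))\ne\emptyset$ one locates a point with $q(\bar x)<0$, a contradiction. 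Hence the unbounded case cannot occur.

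The main obstacle is upper semi-continuity when $\bar d$ lies on the boundary of $\mathbf{R}_+^n$, since then $X(\bar y,\bar d)$ fails to be full-dimensional and the interior argument that rules out unbounded multipliers breaks down. My plan is to first run the argument above on the reduced domain $X\times\mathbf{R}_{\bar d}^n$, where $\mathbf{R}_{\bar d}^n=\{x\in\mathbf{R}_+^n: x_i=0 \text{ if } \bar d_i=0\}$ pins the degenerate directions so the box remains full-dimensional in the surviving variables. Then, writing $d_k=d_k^1+d_k^2$ with $d_k^1$ the projection of $d_k$ onto $\mathbf{R}_{\bar d}^n$ (so that $d_k^1\to\bar d$), I would invoke the inclusion property, i.e. monotonicity of $opt$ in the feasible set, to compare $\Phi(y_k,d_k)$ with $\Phi(y_k,d_k^1)$ and conclude $\limsup_k\Phi(y_k,d_k)\le\Phi(\bar y,\bar d)$. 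Combining this with the lower semi-continuity established above gives continuity on all of $X\times\mathbf{R}_+^n$.
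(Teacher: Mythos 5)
Your proposal is correct and follows essentially the same route as the paper's proof: part (i) via Proposition \ref{sing}, lower semi-continuity by perturbing the optimal certificate into the interior of $P[x]$ using a strictly convex element of the multiplier cone, upper semi-continuity for $\bar d \in int(\mathbf{R}_+^n)$ via the bounded/unbounded multiplier dichotomy with the normalization argument of Proposition \ref{popt}, and the boundary case by restricting to $\mathbf{R}_{\bar d}^n$ and invoking the inclusion property on the projected sequence $d_k^1$. No substantive differences to report.
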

\begin{theorem}
Algorithm 1  is finitely convergent with the stopping accuracy \ \  $\epsilon>0$.
\begin{proof}
Consider the function $\Phi$ on the compact set $X\times (B\cap \mathbf{R}_{+}^n)$, where $B$ stands for the closed unit ball. On account of  Lemma \ref{lcon}, we can infer uniform continuity of $\Phi$ on  the given domain. Additionally, we have the following property
$$
\forall \epsilon>0, \ \exists \delta>0, \forall x\in X, \forall d\in  \mathbf{R}_{+}^n; \ \|d\|_{\infty}<2\delta \ \Rightarrow  |\Phi(x, d)-x^TQx-2c^Tx|<\epsilon.
$$
Since $X$ is compact, the objective function is Lipschitz continuous on it. Without loss of generality,  let Lipschitz modulus be one. Let $\Delta\subseteq X$ be a polytope with diameter less than
$0.5\min\{\epsilon, \delta\}$, that is, $\max_{x_1, x_2\in \Delta} \|x_1-x_2\| <0.5\min\{\epsilon, \delta\}$. As a result, there are $\bar y\in X$ and $\bar d\in  \mathbf{R}_{+}^n$ such that $\Delta\subseteq X(\bar y, \bar d)$ and $\|\bar d\|_{\infty}<2\delta$. Due to the inclusion property and the provided results, we have
\begin{align*}
& | opt(\Delta, Q, c)-\min_{x\in\Delta}(x^TQx+2c^Tx) | \leq |\Phi(\bar y, \bar d)-\min_{x\in\Delta}(x^TQx+2c^Tx) | \\
 & \ \leq |\Phi(\bar y, \bar d)-\bar y^TQ\bar y-2c^T\bar y|+|\bar y^TQ\bar y+2c^T\bar y-\min_{x\in\Delta}(x^TQx+2c^Tx) | \\
 &  \ \leq \epsilon+\epsilon=2\epsilon.
\end{align*}
Since after finite number of branching, the feasible set of subproblems, $\Delta$, is included in $X(y, d)$ for some $y\in X$ and $d\in  \mathbf{R}_{+}^n$ ($\|d\|_{\infty}<\min\{\epsilon, \delta\}$), all nodes will be fathomed and algorithm will stop after finite steps.
\end{proof}
\end{theorem}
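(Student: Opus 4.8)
The plan is to show that the bound \eqref{L}, encoded through the function $\Phi$ of \eqref{ps}, becomes asymptotically exact as a node's feasible set shrinks to a point, and then to combine this with the fact that the branching drives the diameters of the leaf polytopes to zero. The two statements of Lemma \ref{lcon} are exactly the ingredients needed: part (i), $\Phi(y,0)=y^TQy+2c^Ty$, identifies the bound with the true objective on singletons, while part (ii), continuity of $\Phi$ on $X\times\mathbf{R}_+^n$, allows me to control the bound on a thin box by its value on the degenerate box $X(y,0)$. The per-node gap estimate is obtained by merging these two facts across a compact set.

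Concretely, I would first restrict $\Phi$ to the compact set $X\times(B\cap\mathbf{R}_+^n)$, with $B$ the closed unit ball, and upgrade continuity to uniform continuity there. Combined with part (i), this produces a modulus $\delta>0$ such that $|\Phi(y,d)-(y^TQy+2c^Tx)|<\epsilon$ whenever $y\in X$ and $\|d\|_\infty<2\delta$. Next, using that the quadratic objective is Lipschitz on the compact set $X$ (normalizing the modulus to $1$), I would take any leaf polytope $\Delta\subseteq X$ of diameter below $0.5\min\{\epsilon,\delta\}$, enclose it in a box $X(\bar y,\bar d)$ with $\|\bar d\|_\infty<2\delta$, and invoke the inclusion property together with Proposition \ref{w} to get $opt(\Delta,Q,c)\leq\Phi(\bar y,\bar d)$. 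A triangle-inequality split then bounds the node gap $|opt(\Delta,Q,c)-\min_{x\in\Delta}(x^TQx+2c^Tx)|$ by the continuity estimate plus the Lipschitz comparison of $\bar y^TQ\bar y+2c^T\bar y$ with the minimum over $\Delta$, for a total controlled by $2\epsilon$; since $\epsilon$ here is arbitrary, running the estimate with a fraction of the tolerance makes the node gap fall strictly below the algorithm's accuracy.

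Finally, for termination I would argue that each branching bisects a node through its Chebyshev center, so after finitely many subdivisions every surviving leaf has diameter below the required threshold and hence lies in a small box $X(y,d)$. At such a node the lower bound $l_k=opt(\Delta,Q,c)$ sits within the above tolerance of $\min_{x\in\Delta}(x^TQx+2c^Tx)$, while the upper bound $u$, updated from local vertex optima, never exceeds the best attained value; consequently the fathoming test $l_k+\epsilon\geq u$ is eventually met at every open node, the candidate set $\Pi_2$ empties, and the algorithm halts, with the global lower bound of \eqref{upd} within the prescribed accuracy of the optimum. I expect the main obstacle to be the \emph{exhaustiveness} of the branching, namely verifying that Chebyshev-center bisection genuinely forces the leaf diameters to form a null sequence rather than leaving persistent thin slivers; this is the step that justifies the ``after a finite number of branchings'' claim and ties the gap estimate to termination. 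By contrast, once Lemma \ref{lcon} is available the gap estimate itself is comparatively routine, so the geometric exhaustiveness of the subdivision scheme is where the real care is needed.
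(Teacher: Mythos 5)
Your proposal follows essentially the same route as the paper's proof: uniform continuity of $\Phi$ on $X\times(B\cap\mathbf{R}_+^n)$ via Lemma \ref{lcon}, the Lipschitz bound on the objective, enclosure of a small-diameter leaf in a box $X(\bar y,\bar d)$, the inclusion property, and the triangle-inequality split yielding a $2\epsilon$ node gap. The exhaustiveness concern you flag is real but is also the one the paper itself defers to a remark after the theorem (random branching directions give diameter convergence with probability one), so your account matches the paper's argument in both substance and its acknowledged weak point.
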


\indent
It is worth noting that for having the finite convergent, one should adopt a branching procedure which guarantees the diameters of generated polytopes tend to zero. However, if the method selects nonzero vector $d$ randomly, with probability of one, the diameters of generated polytopes will converge to zero.
%%%%%%%%%%%%%%%%%%%%%%%%%%%%%%%%%%%%%%%%%%%%%%%%%%%%%
\section{Computational results}
In this section, we illustrate numerical performance of Algorithm 1 on four groups of test problems.  The code and the test problems are publicly available at {https://github.com/molsemzamani/quadproga}.\\
\indent
We implemented the algorithm using MATLAB 2018b. The computations were run on a Windows PC with  Intel Core i7 CPU, 3.4 GHz, and 16GB of RAM. To solve semi-definite program \eqref{L}, we employed  MOSEK \cite{MOSEK}. To solve convex QP \eqref{Qc}, we employed CPLEX's function cplexqp. In addition, CPLEX was used  for solving linear programs.\\
\indent
To evaluate the performance of Algorithm 1, we compared the numerical results with three non-convex quadratic optimization solvers: BARON 18.11.12, Couenne  v. 1.0 and CPLEX 12.8 \cite{BARON, Couenne, Cplex}. All solvers were run on MATLAB 2018b and we applied AMPL to pass the problems to BARON and Couenne \cite{AMPL}. \\
\indent
In our numerical experiments, we used two stopping criteria, absolute gap tolerance and running time limit, to terminate the solvers. The  absolute gap is defined as a difference between the given lower and upper bounds. \\
\indent
For the first group, we selected twenty concave instances from Globallib folder in \cite{Chen}. This folder contains all non-convex instances of  Globallib test problems \cite{Globallib}. The dimension of problems range from five to fifty. \\
\indent
We set the absolute gap tolerance and the maximum running time  to $10^{-4}$ and $100$ seconds, respectively. Since all methods could give us global optimum with the prescribed gap, we just  report the running time. The performance of all solvers are summarized in Table \ref{Globallib}, which  $n$ denotes the dimension of instances and the rest columns denote the execution time for solvers. \\
\begin{table*}[h]
%\centering
\caption{Globallib instances}
\label{Globallib}
\begin{tabular}{c c c c c c c c c c}
   \toprule
    %\midrule
  Instance  &  n & BARON  & Couenne & CPLEX &Algorithm 1\\
  \hline
     \multicolumn{1}{l}{st-qpc-m1} & 5 & 0.08 & 0.12 & 0.04 & 0.11  \\
         \multicolumn{1}{l}{st-bsj4}& 6	&0.23&	0.09&	0.05&	0.31\\
        \multicolumn{1}{l}{ex2-1-6} &10	&0.29&	0.19&	0.05&	0.56\\
     \multicolumn{1}{l}{st-fp5}  &10	&0.14&	0.11&	0.07&	0.15	 \\
       \multicolumn{1}{l}{st-qpk3}  & 11	 &0.39	 &1.19	 &0.07	 & 0.14	  \\
           \multicolumn{1}{l}{qudlin}  &12	&0.13&	0.1&	0.01&	0.12\\
    \multicolumn{1}{l}{ex2-1-7} &20	&0.76	&10.06	&0.09&	1.38\\
    \multicolumn{1}{l}{st-fp7a} &20	&0.55	&0.58&	0.08&	0.52\\
    \multicolumn{1}{l}{st-fp7b}  &20	&0.45&	0.89&	0.07&	0.48 \\
    \multicolumn{1}{l}{st-fp7d}  & 20	 &0.31	 &0.46	 &0.06	 &0.18	  \\
      \multicolumn{1}{l}{st-fp7e}  &20	&0.81	&10.13	&0.13&	 1.81 \\
    \multicolumn{1}{l}{st-m1}  & 20	& 0.26	& 0.18	& 0.08	& 0.24	\\
     \multicolumn{1}{l}{ex2-1-8}  &24	&0.19&	0.08	&0.01&	0.19	 \\
    \multicolumn{1}{l}{st-m2}  & 30	&0.41	&0.35&	0.18	&0.65	\\
        \multicolumn{1}{l}{st-rv7} & 30	&0.49&	0.87	&0.12	&0.41	 \\
       \multicolumn{1}{l}{st-rv8}  & 40	&0.58&	0.77&	0.13&	1.05\\
       \multicolumn{1}{l}{st-rv9}  & 50	&2.1	&3.69	&0.43	&1.8\\
    \midrule
    \bottomrule
\end{tabular}
        %\centering
\end{table*}
\indent
The second group of examples involves twenty concave QPs with dense data. The test problems were generated as follows. The feasible set, $X$, was given by the following linear system
$$
Ax\leq 10b,\\
\sum_{i=1}^nx_i\leq 100,
x\geq 0,
$$
where square matrix $A$ and and vector $b$ were generated by MATLAB's function randn and rand, respectively. The randn function generates a sample of a Gaussian random variable, with mean 0 and standard deviation 1, while rand generates a uniformly distributed random number between 0 and 1. We also generated the vector $c$ via randn function. We generated the square matrix $Q$ with the formula $Q=-U^TDU$, where $U$ is an orthogonal matrix obtained form the singular value decomposition of some random matrix and $D$ is a diagonal matrix  whose components are chosen by rand function.  We generated twenty concave QPs in $\mathbf{R}^{40}$ and $\mathbf{R}^{45}$. \\
\indent
We set the absolute gap tolerance and the maximum running time to $10^{-3}$ and $1000$ seconds, respectively.  We report the generated lower bound and running time.  If for some instance the running time is less than $1000$ seconds, the solver succeeded in solving with the prescribed gap. Table \ref{dense} reports computational performances of  both methods. In this table, $q^\star$ denotes the optimal value and columns $lb$ and $time$ show the lower bound and the spent CPU time, respectively. To evaluate the quality of the generated upper bound for the case that the running time exceeded the time bound, we measured the difference between the upper bound and the optimal value for all examples. Table \ref{Mdense} reports the maximum of the differences for all examples corresponding to the solvers.  \\
\begin{table*}[t]
%\centering
\caption{Dense instances}
\label{dense}
\begin{tabular}{c c c c c c c c c c}
   \toprule
    %\midrule
    \multirow{2}[4]{*}{Instance }  &\multirow{2}[4]{*}{$q^\star$}  & \multicolumn{2}{c}{BARON} & \multicolumn{2}{c}{Couenne} & \multicolumn{2}{c}{CPLEX} & \multicolumn{2}{c}{Algorithm 1}\\
    \cmidrule(rl){3-4}   \cmidrule(rl){5-6}  \cmidrule(rl){7-8}   \cmidrule(rl){9-10}
&         & $lb$ & $time$   & $lb$ & $time$ & $lb$ & $time$ & $lb$ & $time$ \\
  \hline
 \multicolumn{1}{l}{Ex1-40} &  -2286.1  &-7421.5 &  1000 & -2842.8  &   1000  &  -2286.1    & 36 & -2286.1 & 7    \\
 \multicolumn{1}{l}{Ex2-40}& -3821.4    & -13627 & 1000 & -6390.5  &   1000  &  -3821.4   &  323  & -3821.4  & 204  \\
\multicolumn{1}{l}{Ex3-40} &   -2756.6  &-10617 & 1000 & -4522.3  &   1000  &  -2756.6   &  100  &  -2756.6 & 11   \\
 \multicolumn{1}{l}{Ex4-40}&   -2341.6    & -5714  & 1000 & -4287  &   1000  &    -2341.6   &  39  &  -2341.6   & 9  \\
\multicolumn{1}{l}{Ex5-40}&  -2808.2    & -5660  & 1000 & -4195.5  &   1000  & -2808.2   &  124  & -2808.2  & 11 \\
\multicolumn{1}{l}{Ex6-40}& -4341.8     & -25538  & 1000 & -5319.4  &   1000  & -4341.8    &  30  & -4341.8    &4 \\
\multicolumn{1}{l}{Ex7-40}&  -2465.4    & -5916.9   & 1000 & -3326.3  &   1000  &   -2465.4   &  91  & -2465.4   & 3  \\
\multicolumn{1}{l}{Ex8-40}& -2554.6     & -6570.7  & 1000 & -5246.1  &   1000  &  -2554.6   &  564  &-2554.6 & 424   \\
\multicolumn{1}{l}{Ex9-40}&  -4599.6    & -16653   & 1000 & -5381.7  &   1000  &  -4599.6   &  26  & -4599.6  & 3  \\
\multicolumn{1}{l}{Ex10-40} &   -3446.6  &  -8798.8    & 1000 & -4835.9  &   1000  &  -3446.6    &  40 & -3446.6    & 4\\
%%%%%%%%%%%%%%%%%%%%%%%%%%%%%%%%%%%%%%%%
 \multicolumn{1}{l}{Ex1-45} &  -4493.2  & -6463.3 &  1000 & -2842.8  &   1000  &  -4493.2  &59 & -4493.2 & 13   \\
\multicolumn{1}{l}{Ex2-45}&  -2705.9  &-13214& 1000 & -6039.8  &   1000  &  -2722.9   & 1000  & -2705.9  & 94  \\
\multicolumn{1}{l}{Ex3-45}  & -3057.8   & -19086 & 1000 & -6271.7  &   1000  &   -3057.8   &  461  & -3057.8 & 196  \\
 \multicolumn{1}{l}{Ex4-45}& -2714.1    & -8607.3 & 1000 & -6092.4  &   1000  &  -2714.1   &  689  & -2714.1  & 698  \\%%%%%
\multicolumn{1}{l}{Ex5-45} &   -3028.2   & -13511  & 1000 & -6822.5  &   1000  &  -3075.1   &  1000  & -3028.2  & 888  \\%%%%
\multicolumn{1}{l}{Ex6-45} &   -2354.4   & -10756  & 1000 & -6215.4  &   1000  &  -2549.2   &  1000  & -2354.4   & 657 \\
\multicolumn{1}{l}{Ex7-45} & -3391.4    & -15958  & 1000 & -6561  &   1000  &  -3391.4   &  197  & -3391.4   & 96  \\
\multicolumn{1}{l}{Ex8-45}&  -1948.2    & -6923.2 & 1000 & -3675.8  &   1000 & -1948.2    &  341  & -1948.2  & 9   \\
\multicolumn{1}{l}{Ex9-45} &  -2710.2   & -8781.2   & 1000 & -5014  &  1000  &  -2710.2  &  172  & -2710.2  & 35  \\
\multicolumn{1}{l}{Ex10-45}& -3099     &  -8431.3   & 1000 & -6239.7  &   1000  &  -3099    & 193 & -3099   & 91\\

    \midrule
    \bottomrule
\end{tabular}
\end{table*}
\begin{table*}[t]
%\centering
\caption{Dense instances}
\label{Mdense}
\begin{tabular}{c c c c c c c c c c}
   \toprule
    %\midrule
& \multicolumn{1}{c}{BARON} & \multicolumn{1}{c}{Couenne} & \multicolumn{1}{c}{CPLEX} & \multicolumn{1}{c}{Algorithm 1}\\
  \hline
& 2.1    &  180   & 50 & 0\\
    \midrule
    \bottomrule
\end{tabular}
\end{table*}
\indent
For the third group of the instances, we regarded concave QPs with sparsity. We selected twenty examples form RandQP folder in \cite{Chen}. In most of the instances, $Q$ were indefinite. We shifted eigenvalues such that $Q$ was transformed to a negative semi-definite matrix. Moreover, we considered the instances without equality constraints. As there were box constraints in all instances, the feasible set was bounded. Table \ref{sparse} summarizes the computational performances. For this group of test problems, all solvers gave the upper bound equal to the optimal value.\\
\begin{table*}[t]
\caption{Sparse instances}
\label{sparse}
\begin{tabular}{c c c c c c c c c c}
   \toprule
    %\midrule
    \multirow{2}[4]{*}{Instance }  &\multirow{2}[4]{*}{$q^\star$}  & \multicolumn{2}{c}{BARON} & \multicolumn{2}{c}{Couenne} & \multicolumn{2}{c}{CPLEX} & \multicolumn{2}{c}{Algorithm 1}\\
    \cmidrule(rl){3-4}   \cmidrule(rl){5-6}  \cmidrule(rl){7-8}   \cmidrule(rl){9-10}
&         & $lb$ & $time$   &$lb$ & $time$ & $lb$ & $time$ & $lb$ & $time$ \\
  \hline
    qp40-20-2-1  & -286.31 & -286.31 & 3     & -286.31 & 10    & -286.31 & 1     & -286.31 & 14 \\
    qp40-20-2-2  & -169.572 & -169.572 & 21    & -169.572 & 23    & -169.572 & 1     & -169.572 & 556 \\
    qp40-20-2-3  & -152.31 & -152.31 & 91    & -152.31 & 51    & -152.31 & 4     & -152.31 & 500 \\
    qp40-20-3-1  & -219.664 & -219.664 & 35    & -219.665 & 1000  & -219.664 & 1     & -219.664 & 28 \\
    qp40-20-3-2  & -171.255 & -171.255 & 35    & -171.255 & 43    & -171.255 & 2     & -171.255 & 501 \\
    qp40-20-3-3  & -101.248 & -101.248 & 25    & -101.248 & 39    & -101.248 & 2     & -101.248 & 255 \\
    qp40-20-3-4  & -118.119 & -118.119 & 136   & -118.12 & 1000  & -118.119 & 5     & -118.119 & 53 \\
    qp40-20-4-1  & -240.464 & -240.464 & 188   & -240.464 & 772   & -240.464 & 6     & -240.464 & 104 \\
    qp40-20-4-2  & -168.813 & -168.813 & 117   & -168.813 & 90    & -168.813 & 5     & -168.813 & 28 \\
    qp40-20-4-3  & -93.511 & -93.511 & 662   & -93.511 & 337   & -93.511 & 38    & -93.511 & 561 \\
    qp50-25-1-1  & -430.892 & -430.892 & 49    & -430.892 & 518   & -430.892 & 3     & -430.892 & 557 \\
    qp50-25-1-2  & -131.88 & -131.88 & 71    & -131.88 & 189   & -131.88 & 8     & -131.88 & 619 \\
    qp50-25-1-3  & -137.567 & -137.567 & 167   & -137.569 & 1000  & -137.567 & 7     & -137.567 & 799 \\
    qp50-25-1-4  & -133.52 & -134.151 & 1000  & -133.521 & 1000  & -133.52 & 10    & -133.52 & 716 \\
    qp50-25-2-1  & -269.924 & -269.924 & 84    & -269.924 & 136   & -269.924 & 7     & -269.924 & 648 \\
    qp50-25-2-2  & -204.733 & -204.733 & 654   & -204.733 & 369   & -204.733 & 28    & -204.733 & 612 \\
    qp50-25-2-3  & -167.34 & -167.341 & 1000  & -167.34 & 934   & -167.34 & 8     & -167.34 & 8 \\
    qp50-25-2-4  & -129.209 & -129.21 & 1000  & -129.209 & 107   & -129.209 & 7     & -129.209 & 615 \\
    qp50-25-3-1  & -393.76 & -393.76 & 40    & -393.761 & 1000  & -393.76 & 2     & -393.76 & 40 \\
    qp50-25-3-2 & -224.266 & -224.268 & 1000  & -224.267 & 1000  & -224.266 & 14    & -224.266 & 23 \\

    \midrule
    \bottomrule
\end{tabular}
        %\centering
\end{table*}

\indent
For last group of instances, we considered  the norm maximization problem. This problem can be formulated as concave QP
\begin{equation*}
\begin{array}{ll}
 \ & \min \ -x^Tx
\\ &  s.t. \ Ax\leq b,
\end{array}
\end{equation*}
where $X=\{x: Ax\leq b\}$ is a polytope. Unlike  norm maximization problem, the above problem is  NP-hard. To evaluate the performance of the solvers, we considered the polytopes which were generated for the second group.   Tables \ref{Max} and \ref{MMax} give computational performances.
\begin{table*}[t]
%\centering
\caption{Max norm instances}
\label{Max}
\begin{tabular}{c c c c c c c c c c}
   \toprule
    %\midrule
    \multirow{2}[4]{*}{Instance }  &\multirow{2}[4]{*}{$q^\star$}  & \multicolumn{2}{c}{BARON} & \multicolumn{2}{c}{Couenne} & \multicolumn{2}{c}{CPLEX} & \multicolumn{2}{c}{Algorithm 1}\\
    \cmidrule(rl){3-4}   \cmidrule(rl){5-6}  \cmidrule(rl){7-8}   \cmidrule(rl){9-10}
&         & $lb$ & $time$   & $lb$ & $time$ & $lb$ & $time$ & $lb$ & $time$ \\
  \hline
 \multicolumn{1}{l}{Ex1-40} &  -1087.4  &  -1087.4 &  360 &  -1087.4  &   8  &  -1087.4   & 45 & -1087.4 & 4   \\

 \multicolumn{1}{l}{Ex2-40}&  -1425    & -1425 & 221 & -1448.2  &   1000  &  -1425   &  34 & -1425  & 100  \\

\multicolumn{1}{l}{Ex3-40} &   -1514.5 & -1514.5 & 205 & -1514.5  &   618  &   -1514.5   &  7  &   -1514.5 & 15   \\

 \multicolumn{1}{l}{Ex4-40}&  -1324.1   & -1324.1    &  18 &  -1324.1 &   126  &     -1324.1   &  3 &   -1324.1   &4  \\

\multicolumn{1}{l}{Ex5-40}&  -1206.1   &-1206.1  & 214 & 1206.1  &   1000  &-1206.1   &  20  & -1206.1  & 24 \\

\multicolumn{1}{l}{Ex6-40}& -2104.8    & -2104.8  & 4 & -2104.8 &   288  & -2104.8      &  3  & -2104.8      & 3 \\

\multicolumn{1}{l}{Ex7-40}&  -1150.4   & -1150.8   & 1000 & -1204.1  &   1000  &   -1150.4  &  37  & -1150.4   & 36  \\

\multicolumn{1}{l}{Ex8-40}& -1268.6     & -1268.6   & 305 &  -1268.6  &   702  &  -1268.6    &  24  &-1268.6  & 112   \\

\multicolumn{1}{l}{Ex9-40}&  -2090.7    & -2090.7   & 10 &  -2090.7   &   80  &   -2090.7    &  2  &  -2090.7   & 12  \\

\multicolumn{1}{l}{Ex10-40} & -1503.3  &   -1503.3   & 62 & -1503.3   &   527  &  -1503.3     &  3 & -1503.3     & 3\\

%%%%%%%%%%%%%%%%%%%%%%%%%%%%%%%%%%%%%%%%
 \multicolumn{1}{l}{Ex1-45} &  -2097.9 & -2097.9 &  12 &  -2097.9   &   85  &  -2097.9   & 3 &  -2097.9  & 40   \\

\multicolumn{1}{l}{Ex2-45}&  -1190.4  & -1190.4 & 578 & -1301.6  &   1000  &  -1190.4   & 51  & -1190.4  & 59  \\

\multicolumn{1}{l}{Ex3-45}  & -1636.8   &  -1636.8 & 472 & -1785.1  &   1000  &   -1636.8   &  19  & -1636.8 & 178  \\

 \multicolumn{1}{l}{Ex4-45}& -1527.3    & -1527.3   & 64 & -1527.3  &   537  &  -1527.3   &  7  & -1527.3  & 81  \\
%%%%%
\multicolumn{1}{l}{Ex5-45} &   -1484.8   &  -1484.8  & 803 & -1557.3  &   1000  &   -3.18896   &  64  & -1484.8  & 531  \\
%%%%
\multicolumn{1}{l}{Ex6-45} &   -1106.1   & -1117.8  & 1000 & -1191.2 &   1000  &  -1106.1  &  65  & -1106.1   & 502 \\

\multicolumn{1}{l}{Ex7-45} & -1489.6    & -1489.6  & 244 & -1519.4  &   1000  &  -1489.6   &  10  & -1489.6  & 37  \\

\multicolumn{1}{l}{Ex8-45}&  -939.6   & -1021.7 & 1000 & -985 &   1000 & -939.6     &  33  & -939.6   & 6   \\

\multicolumn{1}{l}{Ex9-45} &  -1235.2   & -1235.2   & 243 & -1277.7 &  1000  &  -1235.2  &  17  &  -1235.2  & 47  \\

\multicolumn{1}{l}{Ex10-45}&  -1557.8     &   -1557.8     & 102 & -1557.8   &   870  &  -1557.8     & 10 & -1557.8    & 38\\

    \midrule
    \bottomrule
\end{tabular}
\end{table*}
\begin{table*}[t]
%\centering
\caption{Max norm instances}
\label{MMax}
\begin{tabular}{c c c c c c c c c c}
   \toprule
    %\midrule
& \multicolumn{1}{c}{BARON} & \multicolumn{1}{c}{Couenne} & \multicolumn{1}{c}{CPLEX} & \multicolumn{1}{c}{Algorithm 1}\\
  \hline
& 0    &  0.9   & 0 & 0\\
    \midrule
    \bottomrule
\end{tabular}
\end{table*}
\\
\indent
On average, CPLEX outperformed other solvers in most instances. After CPLEX, Algorithm 1 had better performance compared to BARON and Couenne in most instances.  Especially, it had the best performance  on the second group of instances, but  its performances on the third group of instances was not satisfactory. \\

%\begin{acknowledgements}
%If you'd like to thank anyone, place your comments here
%and remove the percent signs.
%\end{acknowledgements}
\clearpage
% BibTeX users please use one of
%\bibliographystyle{spbasic}      % basic style, author-year citations
\bibliographystyle{spmpsci}      % mathematics and physical sciences
\bibliography{template}   % name your BibTeX data base

% Non-BibTeX users please use
%

\end{document}